\documentclass[11pt,a4paper]{article}
\usepackage{url}
\usepackage[numbers]{natbib}
\usepackage[english]{babel}

\usepackage{amsmath, amssymb, amsthm}
\usepackage{amsfonts}
\usepackage{graphicx}

\newtheorem{thm}{Theorem}[section]
\newtheorem{lem}[thm]{Lemma}
\newtheorem{prop}[thm]{Proposition}

\theoremstyle{definition}

\newtheorem{rem}[thm]{Remark}

\numberwithin{equation}{section}

\newcommand{\W}{\widehat{W}_c}
\newcommand{\Fd}{\stackrel{D}{=}}
\newcommand{\fd}{\stackrel{d}{=}}

\begin{document}

\title{Geometric influence on the limiting behavior of\\
diffusion processes in one-sided Brownian environments\\
on disconnected fractal sets}

\author{
  Hiroshi Takahashi \\
  \small Keio University \\
  \small Yokohama, Kanagawa 223-8521, JAPAN \\
  \small \texttt{hstaka@fbc.keio.ac.jp}
}
\date{}

\maketitle

\begin{abstract}
We investigate diffusion processes on disconnected fractal sets 
in one-sided Brownian environments. 
On the real line, it is established that the process 
exhibits either diffusion or trapping, each occurring with probability $1/2$.
In this study, we demonstrate that for fractal sets, this behavior 
is governed by the geometric parameters $r$ 
(the reciprocal of the similitude ratio) 
and $N$ (the number of contraction mappings), 
which define the fractal structure.
Two distinct regimes emerge: 
a diffusive regime on the environment-free side 
and a localization regime on the side influenced by the environment. 
The transition between these regimes 
is determined by whether the random environment first hits 
the threshold $\log r$ or $-\log N$. 
Consequently, the probability of diffusion versus trapping 
is explicitly characterized by the Hausdorff dimension 
$d_f = \log N / \log r$. 
This result demonstrates that fractal geometry scales 
the limiting distributions and dictates 
the stochastic regime.

\vspace{1em}
\noindent \textbf{Keywords:} 
Diffusion process; Random environment; Fractal set; 
Localization; Hausdorff dimension
\end{abstract}

\section{Introduction}
Diffusion processes in random environments have been 
a central subject in stochastic analysis 
since the pioneering work of Brox \cite{br}. 
These processes constitute the continuous-time diffusion analogue 
of Sinai's random walk \cite{Si}. 
On the real line, it is well established that the random environment 
governs the limiting behavior, producing ultra-slow diffusion 
characterized by localization within environmental ``valleys.'' Kawazu, 
Suzuki and Tanaka \cite{KST} demonstrated that 
when the Brownian environment is restricted to 
the negative side of ${\mathbb R}$, 
the process exhibits a sharp dichotomy between 
standard diffusion and stochastic trapping.
For a comprehensive review of these developments 
and their broader context of stochastic 
processes in random environments, see \cite{S}.

In this paper, we extend the analysis from Euclidean spaces to 
disconnected self-similar fractal sets. 
Such sets are characterized by scaling properties, and 
diffusion processes on them exhibit propagation patterns that differ 
significantly from those in the Euclidean setting. 
We refer to \cite{F1} and \cite{F2} as foundational contributions 
in this field.
Among these fractal structures, disconnected sets such as Cantor sets present 
a unique framework for investigating anomalous transport. 
A central difficulty in this setting is that diffusion is anomalous even 
in the absence of environmental randomness. 
The scaling properties of fractal sets imply sub-diffusive behavior 
(see \cite{TT1}).
Recent studies demonstrate that diffusion on Cantor-like sets 
can undergo intricate transitions between sub-diffusive and super-diffusive 
regimes, depending on geometric constraints \cite{beyond}.
For instance, a fractal calculus approach has been applied 
to characterize such anomalous transport on fractal combs, 
underscoring the decisive role of the geometry 
\cite{Golmankhaneh2023}.

Beyond their theoretical significance, these disconnected fractal structures 
provide a framework for modeling disordered layered media.
In particular, layered media containing low-conductivity fractal 
inclusions have been shown to induce universal 
super-diffusive permeation \cite{layered}. 
This highlights that the spatial configuration of gaps and 
inclusions within the fractal geometry fundamentally 
governs the emergent macroscopic transport laws.

In our framework, we represent these structures 
through a self-similar speed measure $m_c$ defined 
on a disconnected fractal set 
and examine how the underlying geometry influences the transport dichotomy. 
When such a fractal structure is combined 
with a one-sided Brownian environment, a new question arises: 
How does the fractal geometry shape the competition 
between anomalous diffusion and stochastic trapping? 
In contrast to the $1/2$ probability split observed 
on the real line \cite{KST}, our analysis reveals that 
the limiting behavior is intrinsically governed 
by the Hausdorff dimension $d_f$ (Remark \ref{rem12}).

In Theorem \ref{t1}, we establish that the dichotomy 
between diffusion and stochastic trapping is 
governed by the random environment 
and by the geometric properties of fractal sets, 
namely the reciprocal of the similitude ratio $r$ 
and the number of contraction mappings $N$. 
When the ``hill'' in the one-sided random environment 
is sufficiently high, the process is repelled towards 
the environment-free side, thereby recovering the anomalous diffusive behavior 
intrinsic to the fractal. Conversely, when the ``hill'' 
is low and the environmental fluctuations are sufficiently deep, 
the process localizes within the environment's ``valley.'' 
The thresholds for these heights and depths are explicitly 
determined by the interplay between $r$ and $N$. 
This, in turn, indicates that the fractal dimension $d_f$ serves as 
a critical bifurcation index for the phase transition between diffusion 
and trapping. Consequently, the geometric properties of fractal sets are not 
merely scaling factors 
of anomalous diffusion but rather decisive determinants of 
the fundamental stochastic regime.
The following subsections provide a detailed mathematical 
exposition of our model and main results.

\subsection{One-dimensional cases}

Let ${\mathbb W}$ denote the space of continuous functions 
$W: {\mathbb R} \to {\mathbb R}$ with $W(0)=0$. 
Let $Q$ be the Wiener measure on ${\mathbb W}$ 
such that $\{W(x), x \geq 0\}$ and $\{W(-x), x \geq 0\}$ 
are independent one-dimensional Brownian motions starting 
at the origin. 
The pair $(W,Q)$ denotes a Brownian environment on ${\mathbb R}$. 
Given such an environment, we consider the diffusion process 
$X(t, W)$ starting at $0$, with generator 
\[
L_W=\frac{1}{2} \exp(W(x)) \frac{d}{dx} 
\left( \exp(-W(x)) \frac{d}{dx} \right). 
\]
Brox \cite{br} established that the distribution of 
$(\log t)^{-2} X(t)$ is tight, 
thereby demonstrating that the diffusion process exhibits 
ultra-slow diffusive behavior.

In \cite{KST}, Kawazu, Suzuki and Tanaka analyzed the case 
where $W(x)=0$ for $x \geq 0$ and 
$\{W(-x), x \geq 0, Q\}$ constitutes a Brownian environment. 
In this setting, $X(t, W)$ is referred to as a one-dimensional 
diffusion process in a one-sided Brownian environment 
(or with a one-sided Brownian potential). 
Their results demonstrate that the limiting behavior of 
$\{X(t, W), t \geq 0\}$ is diffusive (i.e., a limit distribution 
exists under Brownian scaling) with probability $1/2$, 
and ultra-slow diffusive with the remaining probability $1/2$. 
Building on this study, Kawazu and Suzuki obtained more precise results 
for this model in \cite{KS}. Moreover, the model has been extended 
to environments governed 
by strictly $\alpha$-stable L\'{e}vy processes for $\alpha \in (0,2]$ 
in \cite{STT}.

The limit theorems in \cite{STT} are derived 
by employing the general theory of one-dimensional bi-generalized 
diffusion processes established by Ogura \cite{Og}, 
together with the refined limit theorem of Tanaka \cite{T}. 
A bi-generalized diffusion process is a one-dimensional Markov process 
characterized by a non-decreasing scale function $s(x)$ 
and a non-negative speed measure $m(dx)$. 
Since its introduction, this framework has been extensively developed 
to analyze diverse diffusion processes in both theoretical 
and applied contexts. 
For instance, Ogura \cite{Og} provided an alternative proof of Brox's 
theorem as an early application. Its versatility is further illustrated 
by subsequent studies: 
Iizuka and Ogura \cite{IO} applied it to genetic models, and 
Takemura, Tomisaki and Iizuka investigated bi-generalized processes 
on finite intervals in \cite{TTI}. More recently, Takahashi and Tamura applied 
this approach to Brox-type diffusions on disconnected fractal sets 
in \cite{TT2}.

Building on these developments, the present study utilizes 
the limit theorems for bi-generalized diffusion processes 
to establish our main results on disconnected self-similar sets.

\subsection{Diffusion processes on disconnected fractal sets}

Based on the framework in \cite{TT1} and \cite{TT2}, 
we provide a precise definition of disconnected self-similar fractal sets 
and diffusion processes associated with them.

Let $r>1$ and $\Phi=\{\varphi_1, \varphi_2, \ldots, \varphi_N\}$ 
denote a family of $r$-similitudes on $[0,1]$. 
We assume that 
(i)\ $\varphi_1(x)=x/r$, 
(ii)\ $\varphi_N(x)=x/r+(1-1/r)$, and 
(iii)\ $\varphi_i([0,1]) \cap \varphi_j([0,1]) = \emptyset$ for $i \neq j$. 
These assumptions imply $N<r$, and without loss of generality 
we may assume $\varphi_i(1) < \varphi_{i+1}(0)$
for $i=1,2, \ldots, N-1$. 
Under these conditions, 
there exists a uniquely determined compact self-similar fractal set 
$C_0 \subset [0,1]$ associated with $\Phi$. 
By extending $C_0$ symmetrically to the whole real line 
via scaling and reflection, we obtain an unbounded disconnected self-similar 
fractal set $C \subset \mathbb{R}$. 
Corresponding to the set $C_0$, there exists 
a unique normalized self-similar measure. 
This extends symmetrically to an infinite self-similar measure $m_c$ 
on $\mathbb{R}$ satisfying:
\begin{align}\label{scale}
m_c(A)=\frac{1}{N^n} m_c(r^n A)
\end{align}
for any Borel set $A \subset {\mathbb R}$ and $n \in {\mathbb Z}$. 
Although both the underlying space $C$ 
and the measure $m_c$ are symmetrically defined 
on the positive and negative sides, the random 
environment $\W$ is introduced only on the negative side,
as specified later in \eqref{environment}.
For simplicity, we also regard $m_c(x)$ as a self-similar function defined by 
\begin{align}\label{m_c}
m_c(x):=
\begin{cases}
m_c([0,x]), & x \geq 0, \\
-m_c([x, 0]), & x<0. 
\end{cases}
\end{align}
Using $m_c$, we define the generator 
\[
\frac{1}{2} \frac{d}{dm_c} \frac{d}{dx}
\]
and consider the associated one-dimensional generalized diffusion process 
starting at the origin, denoted by $\{B_{c}(t), t \ge 0\}$. 
We refer to $\{B_{c}(t)\}$ as the diffusion process on $C$. 
We set 
$\Omega=\{\omega \in \mathcal{C}([0,\infty) \to {\mathbb R}): \omega(0)=0\}$ 
and let $P$ be the Wiener measure on $\Omega$. 
Let $B(t)$ denote the value of a function $\omega \in \Omega$ at time $t$. 
Then, $B_c(t)$ is given by a scale change and a time change of $B(t)$, 
where the time change is determined by $m_c$, in accordance with the theory of 
generalized diffusion processes (\cite{IM}).   
We set
\begin{align*}
L(t, x) &:= \lim_{\varepsilon \to 0} {\frac{1}{\varepsilon}} 
\int_0^t 1_{[x, x+\varepsilon)}(B(s)) \, ds\quad 
\text{(the local time of $B(t)$ at $x$)}, \\ 
A_c(t) &:= \int_{\mathbb R} L(t, x)\, m_c(dx), \\
B_c(t)&:= B(A_c^{-1}(t)). 
\end{align*}
In \cite{TT1}, the process $\{B_c(t)\}$ is constructed as 
the limit of a sequence of appropriately scaled random walks, 
and homogenization problems 
on disconnected fractal sets are investigated. 
Moreover, $\{B_c(t)\}$ satisfies the following scaling relation: 
for each $n \in {\mathbb Z}$, 
\begin{align}\label{cant}
\{B_c(t), t \geq 0\} \fd \left\{ \frac{1}{r^n} B_c((rN)^nt), 
t \geq 0 \right\}, 
\end{align}
where $\fd$ denotes equality in distribution with respect to $P$. 
The scaling parameter $rN$ can equivalently be expressed as $r^{d_f+1}$, 
where $d_f=\log N/\log r$ is the Hausdorff dimension of $C$.

We next consider diffusion processes in Brownian environments 
defined on disconnected fractal sets, as studied in \cite{TT2}. 
Given $(W,Q)$ and $m_c(x)$, we introduce 
\begin{align}\label{ENV}
W_c(x) = W(m_c(x))
\end{align}
and refer to $(W_c, Q)$ as the Brownian environment on $C$. 
For a given $W_{c}$, we consider a generalized one-dimensional 
diffusion process $\{X(t,W_{c}), t\ge 0\}$ starting at the origin 
whose generator is given by
\begin{align}\label{gene1}
L_{W_c}=\frac{1}{2} \exp(W_c(x)) \frac{d}{dm_c} 
\left( \exp(-W_c(x)) \frac{d}{dx} \right).
\end{align}
The diffusion process $\{X(t,W_{c})\}$ is referred to as a diffusion process 
in a Brownian environment on $C$. We assume that 
$\{W_{c}(x), x\in \mathbb{R}\}$ and the standard Brownian motion 
$\{B(t), t\ge 0\}$ employed in the construction of $B_c(t)$ are independent. 
Under this assumption, $\{X(t,W_{c})\}$ is obtained via a scale change 
and a time change of \(B_{c}(t)\) determined by \(W_{c}\) as follows: 
\begin{align*}
S(x) &:= \int_0^x \exp(W_c(y))\, dy, \\
A(t) &= A(t, B_c):=\int_{\mathbb R} \exp\left(-2W_c(S^{-1}(x))\right)
L(t,x)\, m_c(dx)\\
&= \int_0^t \exp\left( -2W_c(S^{-1}(B_c(s))) \right)\, ds, \\
X(t, W_c) &:= S^{-1}(B_c(A^{-1}(t))).
\end{align*}

To characterize the limiting behavior of the process, 
we first introduce the notion of a valley for $W$ 
following \cite{Ke}, and then adapt it to the fractal setting. 
For $W \in {\mathbb W}$, we define 
\begin{align*}
W^{\#}(x) &:= 
\begin{cases}
W(x)-\min\{W(y): 0 \leq y \leq x\}, & x \geq 0, \\
W(x)-\min\{W(y): x \leq y < 0\}, & x < 0, 
\end{cases} \\
\ell^{+}(u) &:= \inf \{x>0: W^{\#}(x) = u\}, \\
\ell^{-}(u) &:= \sup \{x<0: W^{\#}(x) = u\}.
\end{align*}
Using this construction, we set
\begin{align}\label{V}
V^{+}(u) &:= \min\{W(y):\ 0 \leq y \leq \ell^{+}(u)\}, \nonumber \\ 
V^{-}(u) &:= \min\{W(y):\ \ell^{-}(u) \leq y \leq 0\},
\end{align}
and define $b^{+}(u)$ and $b^{-}(u)$ as 
the points satisfying the following: 
\begin{align}
W(b^{+}(u)) &= V^{+}(u), \nonumber \\
W(b^{-}(u)) &= V^{-}(u). 
\end{align}
Note that for each fixed $u>0$, the points $b^+(u)$ and $b^-(u)$ are 
determined uniquely for $Q$-almost all environments 
(see \cite[Lemma 2.2]{br}). 
We define 
\begin{align}\label{height}
M^{+}(u) &:= \max\{W(y):\ 0 \leq y \leq b^{+}(u)\}, \nonumber \\ 
M^{-}(u) &:= \max\{W(y):\ b^{-}(u) \leq y \leq 0\}, 
\end{align}
which represent the ``heights of hills'' 
of the environment associated with the process. 
The ``bottom of the valley'' for \(W\), denoted by \(b(u,W)\), 
is given by
\begin{align}\label{BOTT}
b(u,W):= 
\begin{cases}
b^{+}(u)& \text{if } \max\{M^{+}(u),(V^{+}(u)+u)\} 
			< \max\{M^{-}(u),(V^{-}(u)+u)\},\\ 
b^{-}(u)& \text{if } \max\{M^{+}(u),(V^{+}(u)+u)\}
			> \max\{M^{-}(u),(V^{-}(u)+u)\},
\end{cases}
\end{align}
and we refer to $u$ as the ``depth of the valley.''

Finally, we define the corresponding bottom in the fractal setting. 
The point $b_{c}(u)=b_{c}(u,W_{c})$ is given by 
\begin{align}\label{b_c}
b_c(u) := m_c^{-1}(b(u, W)),
\end{align}
where the inverse function $m_c^{-1}$ is defined as follows:
\begin{align*}
m_c^{-1}(y) := \begin{cases}
\inf\{ x \in C : m_c(x) \ge y \}, & y \geq 0, \\
\sup\{ x \in C : m_c(x) \le y \}, & y < 0.
\end{cases}
\end{align*}
This definition guarantees that $b_c(u)$ represents the point 
in $C$ that is uniquely determined 
as the bottom of the valley in the fractal domain.

We define a probability measure ${\mathcal P}$ on 
$\Omega \times {\mathbb W}$ 
such that ${\mathcal P}= P \otimes Q$. 
Let $\{X(t), t \geq 0\}$ denote a process defined 
on the probability space $(\Omega \times {\mathbb W}, {\mathcal P})$. 
In \cite{TT2}, the following limit theorem is established: 
\begin{thm}[\cite{TT2}]\label{TT2}
The finite-dimensional distributions of the process 
$\displaystyle{
\left\{ 
r^{-2n} X(\exp(N^n u)), u > 0 
\right\}}$ 
under ${\mathcal P}$ converge as $n \to \infty$ 
to the corresponding finite-dimensional distributions 
of the process $\{b_c(u), u>0\}$ under $Q$. 
\end{thm}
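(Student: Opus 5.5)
We will reduce the statement to a Brox-type localization result for a sequence of rescaled environments. The first step is to check that the normalization $r^{-2n}$ in space and $\exp(N^n u)$ in time are the natural ones. By \eqref{scale} we have $m_c(r^{2n}x)=N^{2n}m_c(x)$. Set $W_n(y):=N^{-n}W(N^{2n}y)$. By Brownian scaling, $W_n$ has law $Q$ for every $n$, and
\[
W_c(r^{2n}x)=W(N^{2n}m_c(x))=N^n W_n(m_c(x))=N^n (W_n)_c(x).
\]
Hence $X_n(t):=r^{-2n}X(t,W_c)$ is a generalized diffusion on $C$. Its scale function is $s_n(x)=r^{-2n}\int_0^{r^{2n}x}\exp(W_c(y))\,dy=\int_0^x \exp(N^n (W_n)_c(y))\,dy$. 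Its speed measure is $r^{-2n}N^{2n}\exp(-N^n (W_n)_c(x))\,m_c(dx)$, up to the constant factor. So $X_n$ is the diffusion in the environment $\lambda W'_c$ with $\lambda=N^n$ and $W'=W_n\fd W$, run with a polynomial time factor. At times $\exp(\lambda u)$ this factor is negligible, since it only shifts $u$ by $O(n/N^n)\to 0$.

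Because the law of $W_n$ does not depend on $n$, it suffices to prove the following: if $W'_n$ is a sequence of environments with $W'_n\to W$ locally uniformly (obtained by a Skorokhod coupling), then the diffusion in environment $N^n (W'_n)_c$ at times $\exp(N^n u_i)$ has finite-dimensional laws converging to $\delta_{b_c(u_i,W)}$ for $Q$-a.e.\ $W$. For this we will invoke the limit theorems for bi-generalized diffusion processes of Ogura \cite{Og} and Tanaka \cite{T}, in the form used for the real line. The deterministic input consists of exit-time estimates of Kramers type. The process started in a valley of $\lambda W'$ does not cross a barrier of height $h$ before time $\exp(\lambda(h-\delta))$, and it equilibrates inside a valley of depth less than $u$ well before $\exp(\lambda u)$. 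The invariant measure restricted to the valley is proportional to $\exp(-\lambda W'_c)\,m_c(dx)$, and it concentrates at the minimizer of $W'_c$. These estimates only involve the scale function and the speed measure through integrals of $\exp(\pm\lambda W'(m_c(\cdot)))$. After the substitution $y=m_c(x)$ on the speed side, and after using that $m_c$ is monotone and constant on the gaps of $C$, they become the Euclidean estimates for $W'$. The scale side is handled by the same substitution combined with the bound $|x|\le$ const$\cdot m_c(x)^{1/d_f}$, which only contributes polynomial factors.

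It then remains to identify the limit point. The bottom of the valley of $W_c=W\circ m_c$ in the $x$-variable is $m_c^{-1}(b(u,W))$, because $m_c$ is nondecreasing. Consequently the valley structure (the quantities $\ell^\pm$, $V^\pm$, $M^\pm$ in \eqref{V}, \eqref{height}, and the selection rule \eqref{BOTT}) is read off in the $y=m_c(x)$ coordinate exactly as for $W$. This gives $b_c(u)$ as in \eqref{b_c}. We also need $m_c^{-1}$ to be continuous at $b(u,W)$ for $Q$-a.e.\ $W$. The discontinuities of $m_c^{-1}$ form the countable set of values $m_c$ takes on gaps, and $b(u,W)$ has a continuous law, so a.s.\ it avoids this set. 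The strict inequality in \eqref{BOTT} holds a.s., and this makes the bottom stable under the perturbation $W'_n\to W$. Finite-dimensional convergence then follows by applying the localization at $u_1<\dots<u_k$ successively, using the Markov property. Each $b_c(u_i)$ is a.s.\ a deterministic function of $W$, so joint convergence to a degenerate limit is automatic.

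The main obstacle is the exit-time and equilibration estimates on the disconnected set. Near $x=0$ and near the valley bottom the speed measure $m_c$ is singular. The local time of $B_c$ therefore has to be controlled in terms of $m_c$-mass rather than Lebesgue mass, which is exactly where the bi-generalized framework of \cite{Og} is needed. I would first try to verify Tanaka's conditions \cite{T} directly for the pair $(s_n,m_n)$ in the $m_c$-coordinate.
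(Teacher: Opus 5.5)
Your plan is correct and is essentially the route behind this result. The paper gives no proof here and only cites \cite{TT2}, but the ingredients it uses elsewhere match yours: the scaling relation of Lemma 4.1 in \cite{TT2} (quoted in the proof of Lemma~\ref{sc_j}), which trades the space scale $r^{2n}$ and time factor $(rN)^{2n}$ for the environment multiplied by $N^n$, followed by the Ogura/Tanaka limit theorem for bi-generalized diffusions with normalized scale and speed. Your remark that $b(u,W)$ almost surely avoids the countably many gap values of $m_c$, so that $m_c^{-1}$ is continuous there, is a precision the paper glosses over. One harmless slip: relative to $s_n$ the speed measure carries the factor $(rN)^{2n}$, not $r^{-2n}N^{2n}$, but either way it only shifts $u$ by $O(n/N^n)$.
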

While Theorem \ref{TT2} establishes the ultra-slow behavior 
and localization of the process under a logarithmic time scaling $\exp(N^n u)$ 
in two-sided environments, the restriction to a one-sided environment 
introduces a subtler asymptotic balance. 
Building on this framework, the present study focuses on the one-sided case, 
where the environment is confined to the negative side of $\mathbb{R}$. 
Our objective is to elucidate the role of 
the fractal parameters $r$ and $N$ as decisive determinants 
of the dichotomy between diffusive and trapping regimes.

\section{Model and result}

We define 
\begin{align}\label{environment}
\W(x):=
\begin{cases}
0,& x >0, \\ 
W_c(x),& x \leq 0,
\end{cases}
\end{align}
and refer to $(\W, Q)$ as a one-sided Brownian environment 
on the disconnected fractal set $C$. 
The environment satisfies the semi-self-similar 
scaling relation: for each $n \in {\mathbb N}$, 
\begin{align}\label{scale2}
\left\{ \W(x), x \leq 0 \right\} \Fd 
\left\{ \frac{1}{N^n} \W(r^{2n} x), x \leq 0 \right\},
\end{align}
where $\Fd$ denotes equality in distribution with respect to $Q$. 
We also define 
\[
\W^{(n)}(x) := N^{-n}\W (r^{2n}x).
\]

To examine the interplay between the geometric structure of 
fractal sets and random environments, we focus on the case where 
the depth of the valley is specified by $u = \log rN$. 
In the setting of a one-sided random environment $\W$, 
the bottom of the valley $b_c(\log rN, \W)$ lies on the negative side 
of ${\mathbb R}$ for $Q$-almost all random environments, 
as established in \eqref{V}--\eqref{b_c}. 
For simplicity, we write $V$ and $M$ for $V^-$ and $M^-$ 
defined in \eqref{V} and \eqref{height}, respectively. 
For each $\W$, we define 
\begin{align}
\rho^+ &=\rho^+(\W):=\sup 
\left\{x<0: \W(x)>\log r\right\}, \label{rho} \\
\rho^- &=\rho^-(\W):=\sup 
\left\{x<0: \W(x)<-\log N\right\}. \label{rho2}
\end{align}
Using these, we classify one-sided Brownian environments 
as follows: 
\begin{align*}
{\mathbb A}&=\left\{ \W: \rho^+ > \rho^- \right\},\quad 
{\mathbb A}_n=\left\{ \W: \W^{(n)} \in {\mathbb A} \right\},\\ 
{\mathbb B}&=\left\{ \W: \rho^+ < \rho^- \right\},\quad 
{\mathbb B}_n=\left\{ \W: \W^{(n)} \in {\mathbb B} \right\}. 
\end{align*}
Intuitively, $\mathbb{A}$ represents the set of environments 
in which the ``hill'' repels the process, 
whereas $\mathbb{B}$ corresponds to the set of those 
in which the ``valley'' confines it.

We set 
\begin{align*}
\hat{m}_c(x):=
\begin{cases}
m_c(x),& x \geq 0, \\ 
0,& x < 0. 
\end{cases}
\end{align*}
Let $\widehat{B}_c(t)$ denote a one-dimensional generalized diffusion 
process starting at the origin, corresponding to the generator 
\begin{align}\label{gene}
\frac{1}{2}\frac{d}{d\hat{m}_c(x)} \frac{d}{dx}. 
\end{align}
This process evolves on the positive side of $C$ with a reflecting boundary 
at the origin. 
For a fixed integer $d \geq 2$ and $0 < \varepsilon <1$, we then define 
\begin{align}
T_{d, \varepsilon}:=\{(t_1, \ldots, t_d) \in (0, \infty)^d: 
\ \varepsilon \leq t_1< \cdots < t_d \leq 1/\varepsilon,\ 
t_k-t_{k-1} \geq \varepsilon,\ 
k=2,\ldots, d\}.
\end{align}
The following theorem establishes the phase transition of the process 
$X(t, \W)$, arising from the interaction between the environmental randomness 
and the geometric parameters of the fractal set.

\begin{thm}\label{t1} For any integer $d \geq 2$ and $\varepsilon \in (0,1)$, 
the following hold:

$(1)$ 
For any $a_k > 0,\ k=1, \dots, d$, the joint probability
\begin{align}\label{2-1}
\int P
\left\{ 
\frac{1}{r^{N^n}} X((rN)^{N^n} t_1, \W) > a_1,\ \ldots,\ 
\frac{1}{r^{N^n}} X((rN)^{N^n} t_d, \W) > a_d
	\right\}\, Q(d\W \mid \mathbb{A}_n)
\end{align}
converges to 
\[
P \left\{ \widehat{B}_c(t_1)>a_1,\ \ldots,\ 
\widehat{B}_c(t_d)>a_d \right\}
\]
as $n \to \infty$ uniformly over 
$(t_1, \ldots, t_d) \in T_{d,\varepsilon}$, 
where $\widehat{B}_c(t)$ 
denotes the generalized diffusion process generated by 
\eqref{gene}. 

$(2)$  For any $\varepsilon_k > 0,\ k=1, \dots, d$, 
the probability 
\begin{align}\label{(2)}
\int P\left\{ 
\left| 
\frac{1}{r^{2n}} X((rN)^{N^n} t_k, \W) - b_c(\log rN, \W^{(n)})
\right| 
< \varepsilon_k,\ k=1, \ldots, d \right\}\, 
Q(d\W \mid \mathbb{B}_n)
\end{align}
converges to $1$ as $n \to \infty$ uniformly over 
$(t_1,\ \ldots, t_d) \in T_{d,\varepsilon}$. 
\end{thm}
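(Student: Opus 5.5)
We follow the strategy of \cite{KST} and \cite{STT}: we rescale the process so that the environment becomes $\W^{(n)}$, and then apply the limit theorems for bi-generalized diffusion processes of Ogura \cite{Og} and Tanaka \cite{T}. These theorems assert that if the scale functions $s_n$ and speed measures $m_n$ converge (locally uniformly, respectively vaguely), then the finite-dimensional distributions of the associated processes converge.

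\emph{Step 1 (scaling).} Using \eqref{scale}, \eqref{cant} and \eqref{scale2}, we write $X(t,\W)$, after a space rescaling by $r^{2n}$ and a time rescaling, as the bi-generalized diffusion with scale function
\[
s^{(n)}(x)=\int_0^x \exp\bigl(N^n \W^{(n)}(y)\bigr)\,dy
\]
and speed measure $\exp\bigl(-N^n\W^{(n)}(x)\bigr)\,m_c(dx)$, suitably normalized. For $x>0$ the environment vanishes, so the positive side keeps the pure fractal structure. On the negative side, $\W^{(n)}$ has the same law as $\W$ by \eqref{scale2}, so $Q(\cdot\mid\mathbb{A}_n)=Q(\cdot\mid\mathbb{A})$ after the change of variables. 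It therefore suffices to study a fixed environment $\W\in\mathbb{A}$ or $\W\in\mathbb{B}$, with the large parameter $\lambda=N^n$ in the exponent, and time scale $(rN)^{N^n}=e^{\lambda\log rN}$.

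\emph{Step 2 (case $\mathbb{A}$).} On $\mathbb{A}$, going left from $0$ the environment first exceeds $\log r$ before dropping below $-\log N$. Take spatial scale $r^{\lambda}$ and time scale $(rN)^{\lambda}$. Rescaling by $x\mapsto r^{-\lambda}x$, the scale function on the negative side behaves like $r^{-\lambda}\int e^{\lambda \W}$. This quantity diverges to $-\infty$ at $\rho^+$ because $\W>\log r$ there, so the left boundary becomes infinitely far in scale and acts as reflection. Before $\rho^-$, the speed measure satisfies $N^{-\lambda}\int e^{-\lambda\W}\,dm_c\to0$ because $\W>-\log N$, so the left part carries no mass. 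Hence $s_n\to x$ on $x\ge0$ and $s_n\to-\infty$ left of $0$, while $m_n\to\hat m_c$. The Ogura--Tanaka theorem then gives convergence to $\widehat B_c$. Uniformity over $T_{d,\varepsilon}$ follows from the continuity in $t$ of the limiting transition densities, as in \cite{T}.

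\emph{Step 3 (case $\mathbb{B}$).} On $\mathbb{B}$ the environment dips below $-\log N$ first. Near the valley bottom, the speed mass $e^{-\lambda\W}$ exceeds $N^{\lambda}$ on an exponential scale, while escaping to the right costs time of order $(rN)^{\lambda}$. We argue as in Theorem~\ref{TT2} of \cite{TT2}. Under the scaling $r^{2n}$ and time $e^{\lambda\log rN}$, with depth $u=\log rN$, the rescaled speed measure concentrates as a point mass at $b_c(\log rN,\W^{(n)})$, and the scale function assigns infinite cost to leaving the well. The limiting bi-generalized process is then the constant $b_c$, and Tanaka's refined limit theorem yields probability tending to $1$, uniformly in $t\in T_{d,\varepsilon}$ since $t\ge\varepsilon$.

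\emph{Main obstacle.} The delicate step is the exponential (Laplace-type) asymptotics of $\int e^{\pm\lambda\W}\,dm_c$ on the fractal. Because $m_c$ is singular, the comparison of $\W$ with the thresholds $\log r$ and $-\log N$ must use the self-similarity $m_c([0,r^{k}])=N^{k}$ to convert spatial volume into powers of $N$. I would first prove a lemma that, for $Q$-a.e.\ $\W$,
\[
\frac1\lambda\log\int_{I}e^{-\lambda\W}\,dm_c\ \to\ -\min_I \W
\]
on intervals $I$ meeting $C$. This makes the thresholds appear exactly. I would also check that the boundary events $\rho^+=\rho^-$ and ties in \eqref{BOTT} are $Q$-null.
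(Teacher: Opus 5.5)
Your part (1) follows the paper's route: rescale to the environment $F_n\W$, show the scale function diverges past $\rho^+$ (since $\W>\log r$ there), show the speed mass on $(\rho^+-\varepsilon,0)$ vanishes because $\W>-\log N$ there, and apply Ogura's convergence theorem. You would also need the tightness step to pass from $C_0$ test functions to the events $\{X>a_k\}$.

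The gap is in Step 3, where you treat all of $\mathbb{B}$ at once. The limit theorem you invoke (here, Theorem 4.1 of \cite{STT}) requires the starting point to lie in the interval $J$ on which the normalized scale function stays finite in the limit; this is condition (A2). Normalize so that the speed measure has unit mass near $b_c$ on the time scale $j_n=(rN)^{N^n-2n}$, and write $V=\W(b_c)$, $M=\max_{[b_c,0]}\W$. Then the scale distance between the origin and $b_c$ is of order $\exp\bigl(N^n(M-V-\log rN)\bigr)$. On $\mathbb{B}_2=\{M>V+\log rN\}$, a non-negligible part of $\mathbb{B}$, this diverges. The hill between $0$ and $b_c$ is too high to climb from the bottom within time $j_n$, so the origin is not in $J$. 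The theorem then says nothing about whether the process started at $0$ ever reaches $b_c$. It does reach it, but only because the same hill, seen from the right, has relative height $W^{\#}<\log rN$. This asymmetry is not visible through convergence of scale and speed and needs a separate argument.

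The paper supplies that argument by splitting $\mathbb{B}=\mathbb{B}_1\cup\mathbb{B}_2$. Your Step 3 is essentially the $\mathbb{B}_1$ case, where (A1)--(A4) are checked directly. On $\mathbb{B}_2$ the paper proves a hitting-time bound $P\{\tau(\alpha_c)<j_n^{\theta}\}\to1$ with $\theta<1$, in three steps:
\begin{itemize}
\item it builds an auxiliary environment $\overline{W}_c$ that agrees with $\W$ to the right of $\alpha_c$ and has an artificial deeper well at $\beta_c<\alpha_c$;
\item it proves localization at $\beta_c$ on the shorter time scale $j_n^{\eta/\log rN}$, so by path continuity $\alpha_c$ has been hit;
\item it iterates this over the levels $-k\log N$ when several hills intervene.
\end{itemize}
The strong Markov property then reduces the problem to $\mathbb{B}_1$-type localization over the remaining time $j_n-j_n^{\theta}$.

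Separately, your phrase ``infinite cost to leaving the well'' is wrong on the environment-free side. In the case $\mathbb{B}_1$ (where $V>-\log rN$), the normalized scale function tends to $0$ at every fixed point to the right of the origin. Confinement there comes instead from showing that the natural-scale speed measure has vanishing mass on the whole reachable window $[0,\varepsilon r^{N^n-2n}]$. That mass is of order at most $N^{-2n}e^{N^n(V+\log N+\delta)}$, which tends to $0$ because $V<-\log N$. This is a growing-window self-similarity estimate (condition (A4)), not a fixed-interval Laplace asymptotic of the kind in your proposed lemma.
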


\begin{rem}\label{rem12}
In the Euclidean setting \cite{KST}, 
the process enters the diffusive and trapping regimes 
with equal probability $1/2$, reflecting the inherent symmetry of 
the environment. Our results reveal that on fractal sets, 
this balance is disrupted by the geometric parameters $r$ and $N$. 
Specifically, the threshold conditions \eqref{rho} and \eqref{rho2} 
depend on $\log r$ and $\log N$, yielding the diffusive regime 
$\mathbb{A}$ with probability 
\[
Q(\mathbb{A}) = \frac{\log N}{\log r + \log N} = \frac{d_f}{1+d_f},
\]
whereas the trapping regime $\mathbb{B}$ occurs with probability 
$Q(\mathbb{B})=1/(1+d_f)$. 
These formulas demonstrate that the Hausdorff dimension 
$d_f = \log N / \log r$ serves as the decisive selector 
of the stochastic regime. 
Notably, when $d_f=1$, these probabilities recover 
the $1/2$ split observed in the Euclidean case.
\end{rem}

\section{Proof of Theorem \ref{t1} (1)}

To prove the assertion, we consider scaled one-sided 
Brownian environments. For each $n \in {\mathbb N}$, we define 
\begin{align}\label{h_n}
h_n:=r^{N^n-2n}
\end{align}
and introduce the scaling operator $F_n$ such that 
\begin{align}
(F_n \W)(x):=N^n \W(h_n x),\quad x \in {\mathbb R}.
\end{align}
Note that $(F_n \W^{(n)})(x)=\W(r^{N^n}x)$. 
The subsequent lemma establishes the scaling property of 
diffusion processes in a one-sided Brownian environment on $C$: 

\begin{lem}[\cite{TT2}]\label{lem_scale}
For each $n \in {\mathbb N}$, it holds that 
\begin{align}\label{lem31}
\left\{ \frac{1}{r^{N^n}} X((rN)^{N^n} t, \W),t \geq 0 \right\} 
\fd 
\left\{X(t, F_n \W^{(n)}), t \geq 0 \right\}. 
\end{align}
\end{lem}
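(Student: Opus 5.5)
The idea is to prove \eqref{lem31} for each fixed environment $\W$, and then integrate over $Q$. The statement is really a deterministic scaling identity for generalized diffusions. It uses only the self-similarity \eqref{scale} of $m_c$ and the Brownian scaling of $B$; no distributional property of the environment is needed. Set $k:=N^n\in\mathbb{N}$ and $W'(x):=\W(r^k x)$. Since $(F_n\W^{(n)})(x)=\W(r^{N^n}x)=W'(x)$, the claim reduces to
\[
\{r^{-k}X((rN)^k t,\W),\ t\ge 0\}\fd\{X(t,W'),\ t\ge 0\},
\]
for every fixed $\W$, under $P$.

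\textbf{Step 1 (generator computation).} By \eqref{gene1}, $X(\cdot,\W)$ is the generalized diffusion with scale function $S(x)=\int_0^x e^{\W(y)}dy$ and speed measure $e^{-\W(x)}m_c(dx)$, up to the factor $1/2$. Let $Y(t):=r^{-k}X(ct,\W)$ and substitute $x=r^k y$.
\begin{itemize}
\item For the derivative, $d/dx=r^{-k}\,d/dy$.
\item For the measure, \eqref{scale} applied with the integer exponent $k$ gives $m_c(d(r^k y))=N^k m_c(dy)$.
\item For the scale function, $S(r^k y)=r^k S'(y)$, where $S'(x)=\int_0^x e^{W'(y)}dy$ is the scale function of $W'$.
\end{itemize}
Put $g(y):=f(r^k y)$. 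Then
\[
L_{\W}f\,(r^k y)=(rN)^{-k}L_{W'}g\,(y).
\]
Hence $Y$ has generator $c\,(rN)^{-k}L_{W'}$, and the choice $c=(rN)^k$ gives exactly $L_{W'}$. In terms of the characteristics, the pair (scale function, speed measure) of $Y$ is (a constant multiple of $S'$, the correspondingly rescaled speed measure of $W'$), and the constants cancel.

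\textbf{Step 2 (pathwise representation).} To avoid appealing abstractly to uniqueness for generalized diffusions, I will run the same computation through the explicit construction $X=S^{-1}(B_c(A^{-1}(t)))$ with $B_c=B(A_c^{-1}(\cdot))$. The scaled Brownian motion $\tilde B(t)=r^{-k}B(r^{2k}t)$ has local time $\tilde L(t,x)=r^{-k}L(r^{2k}t,r^k x)$. Combining this with $m_c(r^k\cdot)=N^k m_c(\cdot)$ rewrites the additive functionals $A_c$ and $A$ built from $B$ in terms of those built from $\tilde B$, with time factor $(rN)^k$ and space factor $r^k$. This is the same mechanism as in \eqref{cant}, now with the weight $\exp(-2\W(S^{-1}(\cdot)))$ included. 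Since $\tilde B\fd B$ and $\tilde B$ is independent of $\W$, the identity in law follows for each fixed $\W$. Integrating over $Q$ (with $\W$ fixed inside $P$) then gives \eqref{lem31}.

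\textbf{Main obstacle.} The delicate point is the bookkeeping in the time change. The weight $\exp(-2\W(S^{-1}(x)))$ is evaluated after the scale change, so I must check that $S^{-1}$ transforms consistently: $S^{-1}(r^k z)=r^k S'^{-1}(z)$. I must also check that the local time scaling and the measure scaling combine to exactly $(rN)^k$, with no leftover factor. It is also essential that $k=N^n$ is an integer, because \eqref{scale} holds only for integer powers of $r$. This is precisely why the space scaling is $r^{N^n}$ and the time scaling is $(rN)^{N^n}$.
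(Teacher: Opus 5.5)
Your proof is correct. The paper gives no argument of its own for Lemma~\ref{lem_scale}; it simply cites \cite{TT2}. For the companion Lemma~\ref{sc_j} it only substitutes $t\mapsto j_n t$ into the exponent-$2n$ identity of \cite{TT2}, which changes the time scale but not the spatial factor.

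You instead prove directly the quenched identity $\{r^{-k}X((rN)^k t,\W)\}\fd\{X(t,\W(r^k\,\cdot))\}$, for every fixed $\W$ and every integer $k$. You use only \eqref{scale} and the dilation behaviour of $S$. This is exactly what the lemma needs, since here the spatial factor is $r^{N^n}$ rather than $r^{2n}$. It also makes explicit that no property of $Q$ enters. Your Step~1 is already a complete proof once you invoke uniqueness in law of the generalized diffusion with prescribed scale function and speed measure.

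Two minor remarks.
\begin{itemize}
\item The final integration over $Q$ is superfluous. The lemma is a statement under $P$ for fixed $\W$, and that is the form used later on $\mathbb{A}_n$.
\item If you want Step~2 to stand on its own, do not base it on the displayed formula $S^{-1}(B_c(A^{-1}(t)))$ with weight $\exp(-2\W(S^{-1}(\cdot)))$ from Section~1. That formula carries over Brox's Lebesgue-case construction, and in general it produces a path on $S^{-1}(C)$ rather than on $C$. Use instead the natural-scale representation $X(t,\W)=S^{-1}(B(\alpha^{-1}(t)))$ with $\alpha(t)=\int_{\mathbb R}L(t,S(x))\,e^{-\W(x)}\,m_c(dx)$. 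With this $\alpha$ your bookkeeping goes through verbatim: $S^{-1}(r^kz)=r^kS'^{-1}(z)$, $\tilde L(t,x)=r^{-k}L(r^{2k}t,r^kx)$, and $m_c(r^k\cdot)=N^k m_c(\cdot)$. These give $\alpha'(t)=(rN)^{-k}\alpha(r^{2k}t)$ for the functional built from $\tilde B$ and $\W(r^k\,\cdot)$, hence the claim.
\end{itemize}
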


Using this property and the semi-self-similarity of the environments 
\eqref{scale2}, we analyze the convergence of the probability 
\begin{align}\label{Q(A)}
\frac{1}{Q({\mathbb A})} \int_{\mathbb A}
P \left\{
X(t_1, F_n \W) > a_1,\ \ldots,\ 
X(t_d, F_n \W) > a_d
\right\}\, Q(d\W)
\end{align}
as $n \to \infty$ instead of \eqref{2-1} in Theorem \ref{t1} (1). 
For a fixed $F_n \W$, we consider the generator 
\begin{align}
L_{F_n \W}=\frac{1}{2} \exp\left(F_n \W \right) \frac{d}{dm_c} 
\left( \exp\left(-F_n  \W \right) \frac{d}{dx} \right).
\end{align}
This generator implies that the scale function $\hat{S}_n(x)$ 
and speed measure $\hat{m}_n(dx)$ 
associated with $X(t, F_n \W)$ are given by
\begin{align}
\hat{S}_n(x) &:=\int_0^x \exp\left( (F_n \W)(y) \right)dy, \\ 
\hat{m}_n(dx) &:=2 \exp\left( -(F_n \W)(x) \right) m_c(dx). 
\end{align}
Note that $\hat{S}_n(x)=x,\ x \geq 0$. 
For $x \in {\mathbb R}$, we define 
\begin{align}\label{3_8} 
\hat{m}_n(x) & := 
\begin{cases} 
\hat{m}_n([0,x]), & x \geq 0, \\ 
-\hat{m}_n([x, 0]), & x<0, 
\end{cases} 
\end{align}%
{that is, we adopt the same convention as used for $m_c(x)$ in 
\eqref{m_c}}. 
Using these, we set 
\begin{align}\label{3_9}
\hat{M}_n(x) := \hat{m}_n \circ \left( \hat{S}_n \right)^{-1}(x) 
= \sup \left\{ \hat{m}_n(y):\ \hat{S}_n(y) \leq x \right\}.
\end{align}
Then, we obtain the following proposition: 

\begin{prop}\label{prop}
For $Q$-almost all $\W \in {\mathbb A}$, it holds that 
\begin{align}
\lim_{n \to \infty} \hat{M}_n(x) = 2\hat{m}_c(x),\quad x \in {\mathbb R}.
\end{align}
\end{prop}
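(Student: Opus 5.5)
The plan is to treat $x \ge 0$ and $x<0$ separately. The nonnegative side is immediate. On the negative side the scale function will blow up because of the hill above $\log r$, and the speed mass will vanish because the environment stays above $-\log N$ between that hill and the origin.

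\emph{Case $x \ge 0$.} Since $\W(y)=0$ for $y>0$, we have $(F_n\W)(y)=0$ there. Hence $\hat S_n(y)=y$ for $y\ge 0$ and $\hat m_n(dy)=2\,m_c(dy)$ on $[0,\infty)$. Also $\hat S_n$ is strictly increasing with $\hat S_n(y)<0$ for $y<0$. So the supremum in \eqref{3_9} is attained at $y=x$, and $\hat M_n(x)=\hat m_n(x)=2m_c(x)=2\hat m_c(x)$ for every $n$. It remains to show $\hat M_n(x)\to 0$ for fixed $x<0$.

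\emph{Step 1: blow-up of the scale on the negative side.} Fix $\W\in\mathbb A$ and choose $\rho'\in(\rho^-,\rho^+)$. By the definition \eqref{rho} of $\rho^+$ and continuity of $\W$, there are $\delta>0$ and an interval $I\subset(\rho',0)$ of length $\eta>0$ on which $\W>\log r+\delta$. Changing variables $z=h_n y$, for every $y\le \rho'/h_n$ we get
\[
|\hat S_n(y)| \ge \frac{1}{h_n}\int_I \exp\bigl(N^n \W(z)\bigr)\,dz \ge \eta\, r^{2n-N^n} e^{N^n(\log r+\delta)} = \eta\, r^{2n} e^{N^n\delta}\longrightarrow\infty .
\]
Thus for fixed $x<0$ and all large $n$, every $y$ with $\hat S_n(y)\ge x$ satisfies $y>\rho'/h_n$. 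By the sup convention for $x<0$ (the analogue of $m_c^{-1}$), this gives $0\ge \hat M_n(x)\ge -\hat m_n([\rho'/h_n,0])$.

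\emph{Step 2: vanishing of the speed mass.} By \eqref{rho2}, $\W\ge -\log N$ on $(\rho^-,0)$. Since $\W=W\circ m_c$ is a time-changed Brownian path, $Q$-a.s. it does not attain the fixed level $-\log N$ at a local minimum without crossing it. Hence on the compact set $[\rho',0]$ we have $\min\W\ge -\log N+\delta'$ for some $\delta'>0$; this is the only place the "$Q$-almost all" qualifier is needed. Because $h_n=r^{N^n-2n}$ is an integer power of $r$, the scaling \eqref{scale} gives $m_c([\rho'/h_n,0])=N^{-(N^n-2n)}m_c([\rho',0])$. Therefore
\[
\hat m_n([\rho'/h_n,0]) \le 2\,e^{N^n(\log N-\delta')}\,N^{-N^n+2n}\,m_c([\rho',0]) = 2\,m_c([\rho',0])\,N^{2n}e^{-N^n\delta'}\longrightarrow 0,
\]
and so $\hat M_n(x)\to 0=2\hat m_c(x)$ for $x<0$.

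The main obstacle is Step 2's strict margin $\delta'$. The definition of $\mathbb A$ only gives the weak inequality $\W\ge-\log N$ on $(\rho^-,0)$, so I need the almost-sure fact that $W$ has no local minimum exactly at level $-\log N$. I also need to check that passing from $W$ to $W\circ m_c$, which is constant on the gaps of $C$, does not spoil this property. The bookkeeping of the inverse convention at $x<0$ in Step 1 is routine by comparison.
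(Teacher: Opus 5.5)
Your argument is correct and is essentially the paper's proof. The paper also confines $h_n(\hat S_n)^{-1}(x)$ eventually to an interval $[\rho^+-\varepsilon,0]$ to the right of $\rho^-$, because the scale function explodes to the left of $\rho^+$. It then kills the speed mass there using $\W>-\log N$ together with the factor $N^{2n}/N^{N^n}$ coming from the self-similarity \eqref{scale}.

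The paper proves the full two-sided limit $h_n(\hat S_n)^{-1}(y)\to\rho^+$ via the Laplace method, but only the one-sided bound you establish is actually used. Your explicit elementary bounds replace the Laplace asymptotics. You also state explicitly the almost-sure fact that $\W$ has no local minimum at level $-\log N$, including the flat minima on gaps of $C$; the paper uses this tacitly when it asserts $\inf\{\W(x):\rho^+<x<0\}>-\log N$.
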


\begin{proof}
We adopt the strategy employed in the proof of Lemma 5.1 
in \cite{STT}. On the negative side, we observe 
\begin{align*}
\hat{S}_n(h_n^{-1} z)&=\int_0^{h_n^{-1}z} 
\exp\left( N^n \W(h_n y)\right)\ dy \\
&= \frac{1}{h_n} \int_0^z \exp\left(N^n \W(u)\right) \, du\\
&= -\frac{r^{2n}}{r^{N^n}} \int_z^0 
\exp \left( N^n \W(u)\right)\, du,
\end{align*}
where $u=h_n y$. 
By the classical Laplace method, 
the asymptotic behavior of this integral is governed by 
the supremum of the environment $\W$ over the interval of integration. 
Specifically, we obtain 

\[
\lim_{n \to \infty} \frac{1}{N^n} \log \int_z^0 
\exp \left(N^n \W (u)\right)\, du 
= \sup\left\{\W(x):\ z < x <0\right\}. 
\]
Combining this with the scaling factor $h_n$ defined in 
\eqref{h_n}, we establish the following convergences:

(I)\ For any $\varepsilon >0$ satisfying $\rho^+ + \varepsilon <0$, 
\[
\lim_{n \to \infty} \hat{S}_n(h_n^{-1}(\rho^++\varepsilon))=0 \quad 
{\text{for }Q\text{-almost all }\W \in \mathbb{A}}. 
\]

(II)\ For any $\varepsilon >0$, 
\[
\lim_{n \to \infty} \hat{S}_n(h_n^{-1}(\rho^+-\varepsilon))=-\infty \quad 
{\text{for }Q\text{-almost all }\W \in \mathbb{A}}. 
\]

\noindent 
The preceding assertions imply that, for any fixed $y<0$ and sufficiently 
small $\varepsilon >0$, there exists $n_0$ such that for any 
$n > n_0$
\begin{align*}
\hat{S}_n(h_n^{-1}(\rho^+ - \varepsilon)) < y < 
\hat{S}_n(h_n^{-1}(\rho^+ + \varepsilon)).
\end{align*}
Consequently, for any $y<0$, we obtain
\begin{align}\label{rho_lim}
\lim_{n \to \infty} h_n \left( \hat{S}_n \right)^{-1}(y)=\rho^+.
\end{align}
For a fixed $y_1<0$, we define 
$z_{n,1} := \left( \hat{S}_n \right)^{-1}(y_1)$. 
{Using the property \eqref{scale}}, we obtain 
\begin{align*}
\hat{M}_n(y_1) &= \hat{m}_n(z_{n,1}) \\ 
&= -2 \int_{\left( \hat{S}_n \right)^{-1}(y_1)}^0
\exp\left( -N^n \W(h_n y)\right ) \, m_c(dy) \\ 
&=- \frac{2N^{2n}}{N^{N^n}} 
\int_{h_n \left( \hat{S}_n\right)^{-1}(y_1)}^0 
\exp\left( -N^n \W (u)\right) \, m_c(du),
\end{align*}
where $u=h_n y$. 
Combining this with \eqref{rho_lim}, 
it follows that for any $\varepsilon > 0$, 
there exists $n_0$ such that for any $n > n_0$, 
\[
\frac{2N^{2n}}{N^{N^n}} 
\int_{h_n \left( \hat{S}_n\right)^{-1}(y_1)}^0 
\exp\left( -N^n \W (u)\right) \, m_c(du)
\leq 
\frac{2N^{2n}}{N^{N^n}} 
\int_{\rho^+ - \varepsilon}^0 \exp \left(-N^n\W(y)\right) \, m_c(dy).
\]
{Since $\rho^+ > \rho^-$ for any $\W \in {\mathbb A}$, 
it follows that $\inf\left\{\W(x): \rho^+ <x<0 \right\} > -\log N$. 
Moreover, by the continuity of $\W(x)$, we can choose $\varepsilon > 0$ 
sufficiently small such that 
$\inf\left\{\W(x): \rho^+ - \varepsilon \le x \le 0\right\} > -\log N$.
Consequently, applying the classical Laplace method yields
\[\lim_{n \to \infty}
\frac{2N^{2n}}{N^{N^n}} \int_{\rho^+ - \varepsilon}^0 
\exp \left( -N^n \W(y)\right)\, m_c(dy) = 0.
\]}
This implies that
\[
\lim_{n \to \infty} \hat{M}_n(x)=0, \quad x<0.
\]

On the non-negative side, which is independent of the environment, we have 
$\hat{S}_n(x) = x$ and $\hat{m}_n(dx) = 2 m_c(dx)$. 
Consequently, it follows that $\lim_{n \to \infty}\hat{M}_n(x) = 
2\hat{m}_c(x)$ for all $x \geq 0$.

By combining these two cases, we deduce that 
$\lim_{n \to \infty} \hat{M}_n(x) = 2\hat{m}_c(x)$ 
holds for all $x \in \mathbb{R}$.
\end{proof}

The convergence established in Proposition \ref{prop} provides 
a clear probabilistic interpretation 
of the diffusive regime. The transformation $\hat{M}_n$ 
corresponds to the speed measure of the process $X(t, F_n \W)$, 
normalized by the scale function $\hat{S}_n$. 
The vanishing of $\hat{M}_n(x)$ for $x < 0$ in the limit 
indicates that the ``residence time'' of the process 
in the negative environment becomes negligible. 
Consequently, the limiting process $\widehat{B}_c$ is confined
to the non-negative region $J_0 := [0, \infty)$, 
where the environment is absent.

\begin{proof}[Proof of Theorem \ref{t1} (1)]

We follow the proofs of Theorem 3.2 (1) and Theorem 5.3 in \cite{STT}. 
Let $\hat{q}_n(t,x,y)$ and $\hat{q}(t,x,y)$ denote 
the transition densities associated with the generalized 
diffusion processes corresponding to $(x, \hat{M}_n)$ 
and $(x, 2\hat{m}_c)$, respectively. 
By Proposition \ref{prop} and Proposition 5.1 in \cite{Og}, 
we deduce that $\hat{q}_n(t,x,y)$ converges to $\hat{q}(t,x,y)$ 
uniformly on compact subsets of $(0, \infty) \times {\mathbb R} 
\times {\mathbb R}$ as $n \to \infty$. 
Consequently, for $Q$-almost all $\W \in {\mathbb A}$, and 
for any integer $d \geq 2$, $\varepsilon \in (0,1)$, and 
$f_k \in C_0(J_0), k=1,\ldots, d$, it follows that 
\begin{align}\label{fd}
\lim_{n \to \infty} 
E\left[ \prod_{k=1}^d f_k \left( X(t_k, F_n\W) \right) \right] 
= 
E\left[ \prod_{k=1}^d f_k \left( \widehat{B}_c(t_k) \right) \right]
\end{align}
uniformly over $(t_1, \ldots, t_d) \in T_{d, \varepsilon}$.

Here, \eqref{fd} implies that for any $0 < a_k < a_k',\ k=1, \ldots, d$, 
\begin{align}\label{Proof_1}
\lim_{n \to \infty}
P \left\{ a_k < X(t_k, F_n\W) < a_k',\ k=1, \ldots, d \right\} 
=
P \left\{ a_k < \widehat{B}_c(t_k) < a_k',\ k=1, \ldots, d \right\}
\end{align}
uniformly over $(t_1, \ldots, t_d) \in T_{d, \varepsilon}$ 
for $Q$-almost all $\W \in {\mathbb A}$. 
Following the argument employed in the proof of 
Theorem 6.1 in \cite{Og} (specifically, establishing (6.6) therein), 
we obtain 
\begin{align}\label{Proof_2}
\lim_{a \to \infty} \limsup_{n \to \infty} 
\sup_{\varepsilon \leq t \leq 1/\varepsilon} 
P \left\{ X(t, F_n\W)>a \right\} =0.
\end{align}
By the definition of $\hat{m}_c(x)$, it follows that
\begin{align}\label{Proof_3}
\lim_{a \to \infty} 
\sup_{\varepsilon \leq t \leq 1/\varepsilon} 
P \left\{ \widehat{B}_c(t)>a \right\} =0.
\end{align}
From the convergences \eqref{Proof_1}--\eqref{Proof_3}, 
the claim follows. 
Specifically, these convergences, together with the tightness 
of the distributions, justify the application of 
the dominated convergence theorem 
under the measure $Q(\cdot \mid \mathbb{A}_n)$. This establishes the proof. 
\end{proof}

\section{Proof of Theorem \ref{t1} (2)}
To derive the limiting distribution, we introduce the scaling operator 
$G_n$ for each $n \in \mathbb{N}$ defined by
\[
(G_n \W)(x):=N^n \W(x),\quad x \in {\mathbb R}. 
\]
We observe that $(G_n \W^{(n)})(x)=\W(r^{2n}x)$. 
For each $n \in \mathbb{N}$, we define 
\begin{align}
j_n:=(rN)^{N^n-2n}. 
\end{align}
Then, the following scaling relation holds:

\begin{lem}\label{sc_j}
For each $n \in {\mathbb N}$, it holds that 
\begin{align}\label{lem41}
\left\{ \frac{1}{r^{2n}} X((rN)^{N^n} t, \W), t \geq 0 \right\} 
\fd 
\left\{X(j_n t, G_n \W^{(n)}), t \geq 0 \right\}. 
\end{align}
\end{lem}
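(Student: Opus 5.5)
The plan is to compare scale functions and speed measures. Fix an environment $\W$. The process $X(t,\W)$ is the generalized diffusion with scale function $S(x)=\int_0^x \exp(\W(y))\,dy$ and speed measure $m(dx)=2\exp(-\W(x))\,m_c(dx)$. A generalized diffusion is determined in law by its pair (scale, speed), so it suffices to show that the rescaled process
\[
Y(t):=r^{-2n}X(\lambda t,\W), \qquad \lambda:=(rN)^{N^n},
\]
and the process $X(j_n t, G_n\W^{(n)})$ have the same pair, up to the usual normalisation. This is the same mechanism as in Lemma \ref{lem_scale}, with $r^{2n}$ in place of $r^{N^n}$ as the spatial factor.

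The first step is the general rescaling rule. If $X$ has scale $s$ and speed $m$, then $c^{-1}X(\lambda t)$ is the generalized diffusion with scale $s_Y(x)=\alpha^{-1}s(cx)$ and speed $m_Y(dx)=\alpha\lambda^{-1}m(c\,dx)$, for any normalising constant $\alpha>0$. I would verify this directly from the construction $X=S^{-1}(B_c(A^{-1}(t)))$, equivalently $B(\cdot)$ time-changed by $\int L(t,x)\,m(dx)$ in natural scale. Two facts are needed: Brownian scaling $B(\alpha t)\fd\sqrt{\alpha}B(t)$, and the corresponding local time identity $L(\alpha t,\sqrt{\alpha}x)\fd\sqrt{\alpha}L(t,x)$. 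Alternatively one can check it at the level of the generator $\frac12\frac{d}{dm}\frac{d}{ds}$.

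The second step is the computation, taking $c=\alpha=r^{2n}$. By definition of $\W^{(n)}$ we have $\W(r^{2n}x)=N^n\W^{(n)}(x)=(G_n\W^{(n)})(x)$. Substituting $y=r^{2n}u$ gives
\[
S(r^{2n}x)=r^{2n}\int_0^x \exp\bigl((G_n\W^{(n)})(u)\bigr)\,du,
\]
so $s_Y$ is exactly the scale function of $X(\cdot,G_n\W^{(n)})$. For the speed, the self-similarity \eqref{scale} gives $m_c(r^{2n}dx)=N^{2n}m_c(dx)$, hence
\[
m_Y(dx)=r^{2n}N^{2n}\lambda^{-1}\cdot 2\exp\bigl(-(G_n\W^{(n)})(x)\bigr)\,m_c(dx)=j_n^{-1}\,m_{G_n\W^{(n)}}(dx),
\]
since $(rN)^{2n-N^n}=j_n^{-1}$. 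Dividing the speed measure by $j_n$ is the same as running the clock $j_n$ times faster. Therefore $Y$ has the law of $X(j_n t, G_n\W^{(n)})$ under $P$, for each fixed $\W$, which is \eqref{lem41}.

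The step I expect to require the most care is the first one: justifying the rescaling rule for \emph{generalized} diffusions whose speed measure $m_c$ is singular. In that setting the process is defined through local times and $m_c^{-1}$ rather than through a classical generator, so the rule cannot simply be read off from a differential operator. I would handle it by carrying the substitution through $A(t)=\int \exp(-2\W(S^{-1}(x)))L(t,x)\,m_c(dx)$ explicitly, using the Brownian scaling of $L$ together with \eqref{scale}, exactly as is done for Lemma \ref{lem_scale} in \cite{TT2}.
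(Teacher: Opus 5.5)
Your proposal is correct. With $c=\alpha=r^{2n}$ the rescaled scale function is exactly $\int_0^x \exp((G_n\W^{(n)})(u))\,du$. Using \eqref{scale} with exponent $2n$, the rescaled speed measure is $(rN)^{2n}(rN)^{-N^n}\cdot 2e^{-G_n\W^{(n)}}m_c(dx)=j_n^{-1}\cdot 2e^{-G_n\W^{(n)}}m_c(dx)$. The factor $j_n^{-1}$ is then rightly absorbed as a time acceleration by $j_n$.

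The paper takes a different, shorter route. It does no computation: it quotes Lemma 4.1 of \cite{TT2} in the form $\{r^{-2n}X((rN)^{2n}t,\W)\}\fd\{X(t,G_n\W^{(n)})\}$. It then substitutes $j_n t$ for $t$, using $(rN)^{2n}j_n=(rN)^{N^n}$.

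Your argument is in effect a self-contained proof of that cited lemma, with the full time factor $(rN)^{N^n}$ inserted at once. The choice $\lambda=(rN)^{2n}$ is precisely the one for which the rescaled speed measure matches with no leftover constant. Your $j_n^{-1}$ is therefore just the paper's time substitution in disguise, and you could shorten your write-up by proving the rule only for $\lambda=(rN)^{2n}$ and then reparametrizing time.

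What the paper's route buys is brevity, at the cost of relying on a result stated in the setting of \cite{TT2}. What your route buys is transparency. It shows the identity is quenched: it holds under $P$ for each fixed $\W$, with no use of the distributional self-similarity \eqref{scale2}. It also shows that only two ingredients enter: the deterministic self-similarity of $m_c$, and Brownian scaling of $B$ and its local time. So nothing specific to the two-sided Brownian environment of \cite{TT2} is needed.
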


{\begin{proof}
Lemma 4.1 in \cite{TT2} implies that 
\begin{align*}
\left\{ \frac{1}{r^{2n}} X((rN)^{2n}t, \W), t \geq 0 \right\} 
\fd \left\{X(t, G_n \W^{(n)}), t \geq 0 \right\}. 
\end{align*}
Replacing $t$ by $j_n t$ in the preceding equation yields 
the scaling relation \eqref{lem41}.
\end{proof}}

Analogously to \eqref{Q(A)}, we examine 
the convergence of the probability 
\begin{align}
\frac{1}{Q({\mathbb B})} \int_{\mathbb B} 
P \left\{ 
\left| X(j_n t_k, G_n \W) -b_c(\log rN, \W) \right| < \varepsilon_k,\ 
k=1, \ldots, d \right\} 
Q(d \W)
\end{align}
as $n \to \infty$ instead of \eqref{(2)} in Theorem \ref{t1} (2). 
For a fixed $\W$, we set 
\[
b_c = b_c(\W) := b_c(\log rN, \W). 
\]
We then introduce the shifted process and 
the self-similar function associated with $b_c$ defined by 
\begin{align}\label{Y_n}
Y_n(t, G_n\W)&:=X(j_nt, G_n\W)-b_c, \\
m_c^{b_c}(x)&:= m_c(x+b_c)-m_c(b_c). \nonumber
\end{align}
The generator of $Y_n(t, G_n \W)$ is given by the following: 
\begin{align}\label{geney}
L_{G_n \W}=\frac{j_n}{2} \exp 
\left( (G_n \W)(x+b_c)-(G_n \W)(b_c) \right) 
\frac{d}{dm_c^{b_c}}
\left( 
\exp 
\left( -\{(G_n \W)(x+b_c)-(G_n \W)(b_c)\} \right) \frac{d}{dx}
\right).
\end{align}

To establish Theorem \ref{t1} (2), 
we partition the set $\mathbb{B}$ into two subsets according 
to the height $M$: 
\begin{align*}
{\mathbb B}_1&:=\left\{ \W \in {\mathbb B}: M < V + \log rN \right\},\\ 
{\mathbb B}_2&:=\left\{ \W \in {\mathbb B}: M > V + \log rN \right\}, 
\end{align*}
{where, as introduced in Section 2, 
$V$ and $M$ are used as shorthand for 
$V^-$ in \eqref{V} and $M^-$ in \eqref{height}, respectively.}

\subsection{Case of ${\mathbb B}_1$}

Taking $\delta \in (0, (-\log N - V))$, we set 
\begin{align*}
\ell_c&=\ell_c(\W) 
:= \sup \left\{ y<b_c:\ \W(y)-\W(b_c)=\log rN \right\}, \\
\tilde{\ell}_c &= \tilde{\ell}_c(\W) 
:= \sup \left\{ y < b_c:\ \W(y) - \W(b_c) = \delta \right\}, \\
\tilde{r}_c &=\tilde{r}_c(\W)  
:= \inf \left\{y > b_c:\ \W(y) - \W(b_c) = \delta \right\}. 
\end{align*}
From the generator \eqref{geney}, the scale function $\tilde{S}_n(x)$ 
and the speed measure $\tilde{m}_n(dx)$ associated with 
$Y_n(t, G_n \W)$ are given by 
\begin{align}
\tilde{S}_n(x)&=\frac{2 \tilde{K}_n}{j_n}\int_{-{b_c}}^x
\exp \left( (G_n \W)(y+b_c)-(G_n\W)(b_c) \right) dy,\\
\tilde{m}_n(dx)&=
\tilde{K}_n^{-1}
\exp \left( -\left( (G_n \W)(x+b_c)-(G_n\W)(b_c) \right) \right) 
m_c^{b_c}(dx),
\end{align}
where 
\begin{align}\label{K_n}
\tilde{K}_n &:= \int_{-\varepsilon_0}^{\varepsilon_0} 
\exp \left( 
-\left( (G_n\W)(y+{b_c})-(G_n\W)({b_c}) \right) \right)
m_c^{b_c}(dy), \\ 
\varepsilon_0 &:= \min 
\left\{b_c-\tilde{\ell}_c, \tilde{r}_c-b_c\right\}. \nonumber 
\end{align}
Similarly to \eqref{3_8}, we define a function $\tilde{m}_n(x)$ 
for $x \in \mathbb{R}$ from the measure $\tilde{m}_n(dx)$. 
\begin{align}\label{4_M}
\tilde{M}_n(x) := \tilde{m}_n \circ \left( \tilde{S}_n \right)^{-1}(x) 
= \sup \left\{ \tilde{m}_n(y):\ \tilde{S}_n(y) \leq x \right\}.
\end{align}

For $Q$-almost all $\W \in {\mathbb B}_1$, 
we observe 
\begin{align}\label{ell0}
\lim_{n \to \infty} \tilde{S}_n(x) =  
\begin{cases}
0, & x > \ell_c {-b_c}, \\
-\infty, & x < \ell_c {-b_c}, \\
\end{cases}
\end{align} 
and set $J_1 :=(\ell_c {- b_c}, \infty)$. 
The normalization factor $\tilde{K}_n$ ensures 
that the mass of the speed measure $\tilde{m}_n(dx)$ is 
concentrated near zero. 
Specifically, for $Q$-almost all $\W \in \mathbb{B}_1$, 
the measure $\tilde{m}_n(dx)$ restricted to $J_1$ converges 
vaguely to $\delta_0(dx)$ as $n \to \infty$. 
This concentration implies that the process spends 
the overwhelming majority of its time near the bottom of the valley. 
We also obtain the following convergence. 
\begin{lem}\label{lem1}
On ${\mathbb R}$, the measure 
$\tilde{M}_n(dx)$ 
converges vaguely to $\delta_0(dx)$ as $n \to \infty$ 
for $Q$-almost all $\W \in {\mathbb B}_1$. 
\end{lem}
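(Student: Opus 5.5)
The plan is to show that the function $\tilde M_n(x)=\tilde m_n\circ(\tilde S_n)^{-1}(x)$ converges pointwise to the step function $1_{[0,\infty)}(x)$. More precisely, we aim for $\tilde M_n(x)\to 0$ for $x<0$ and $\tilde M_n(x)\to 1$ for $x>0$ (up to the convention at the jump). Pointwise convergence of the distribution functions at all $x\neq 0$, with the limit constant on each side, gives vague convergence of $\tilde M_n(dx)$ to $\delta_0(dx)$. We work with a fixed $\W\in\mathbb{B}_1$ outside a $Q$-null set. On this set $b_c<0$ is uniquely determined, $\W(y)-\W(b_c)>0$ for $y\neq b_c$ in $(\ell_c,\infty)$ near $b_c$, and the relevant suprema are attained only at isolated points.

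First we describe $\tilde S_n$ via the Laplace method, as in the proof of Proposition \ref{prop}. Write $\tilde S_n(x)=\frac{2\tilde K_n}{j_n}\int_{-b_c}^x \exp(N^n(\W(y+b_c)-\W(b_c)))dy$. We have $\tilde K_n\le m_c([b_c-\varepsilon_0,b_c+\varepsilon_0])$, and $\tilde K_n\ge \exp(-N^n\eta)\,m_c(\text{small neighbourhood of } b_c)$ for every $\eta>0$, because $b_c\in C$ so $m_c$ charges every neighbourhood of it. Hence $N^{-n}\log\tilde K_n\to 0$, while $N^{-n}\log j_n\to\log rN$.

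\begin{itemize}
\item For $x>\ell_c-b_c$, the supremum of $\W(\cdot)-\W(b_c)$ between $x+b_c$ and $0$ is $<\log rN$. This is the defining property of $\ell_c$ together with the $\mathbb{B}_1$ condition $M<V+\log rN$, which keeps the hill on the right of $b_c$ below the threshold; on the positive side $\W=0$ and $-V<\log rN$. So $\tilde S_n(x)\to0$ uniformly on compacts of $J_1$.
\item For $x<\ell_c-b_c$, the supremum exceeds $\log rN$, so $\tilde S_n(x)\to-\infty$. This is \eqref{ell0}.
\end{itemize}

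It follows that for any fixed $y<0$, $(\tilde S_n)^{-1}(y)\to \ell_c-b_c$, and for $y>0$, $(\tilde S_n)^{-1}(y)\to+\infty$.

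Next we evaluate $\tilde m_n$ at these points.

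\begin{itemize}
\item For $y>0$: $\tilde m_n([0,(\tilde S_n)^{-1}(y)])$ consists of the mass on $[0,\varepsilon_0]$ divided by $\tilde K_n$, plus the contribution from $(\varepsilon_0,\infty)$. That remaining contribution is $\tilde K_n^{-1}\int\exp(-N^n(\W-\W(b_c)))dm_c$. On $(\varepsilon_0+b_c,\infty)$ the exponent is at least $N^n\delta'$ for some $\delta'>0$ on compacts. On the positive half-line the exponent is $N^n(0-V)$ with $-V>\log N$. Since $m_c([0,R])$ grows only polynomially (like $R^{d_f}$ by \eqref{scale}) and $\tilde S_n$ grows linearly on the positive side at rate $2\tilde K_n/j_n$, the range $(\tilde S_n)^{-1}(y)\approx y j_n/(2\tilde K_n)$ has $m_c$-mass of order $e^{N^n(\log N+o(1))}$. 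This is beaten by $e^{-N^n(-V)}$ precisely because $-V>\log N$, which is the defining inequality of $\mathbb B$ through $\rho^-$. Hence $\tilde m_n((\tilde S_n)^{-1}(y))\to \tilde m_n$-mass of $[0,\varepsilon_0]$ plus $o(1)$.
\item The symmetric argument on $[\ell_c-b_c+\eta,0]$ gives $\tilde m_n((\tilde S_n)^{-1}(y))\to -(\text{mass of }[-\varepsilon_0,0])+o(1)$ for $y<0$. Here the exponent away from $b_c$ is bounded below by a positive constant times $N^n$.
\end{itemize}

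By the Laplace method the mass of $[-\varepsilon_0,\varepsilon_0]$ concentrates at $0$ and equals $1$ by the normalization $\tilde K_n$. The split between $[-\varepsilon_0,0)$ and $[0,\varepsilon_0]$ may not converge, but this only affects the location of the jump at $0$ and not the vague limit of $\tilde M_n(dx)$. Hence $\tilde M_n(y)-\tilde M_n(y')\to 1$ for $y'<0<y$, and $\tilde M_n(y)-\tilde M_n(y')\to0$ when both have the same sign. This yields $\tilde M_n(dx)\to\delta_0(dx)$ vaguely.

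The main obstacle is the second bullet for $y>0$, where the inverse scale function runs off to $+\infty$ on the environment-free side. There we must balance the polynomial growth of $m_c$, given by the factor $N$ per scale $r$, against the exponential suppression $e^{N^nV}$ and the size $j_n/\tilde K_n\approx (rN)^{N^n}$. This is exactly where the threshold $-\log N$ in $\rho^-$ enters, and it needs the careful exponent count $m_c([0,(rN)^{N^n}])\approx N^{N^n}$ up to subexponential factors in $N^n$.
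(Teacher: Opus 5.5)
Your overall plan matches the paper's proof. You use Laplace asymptotics to get $(\tilde S_n)^{-1}(y)\to\ell_c-b_c$ for $y<0$ and $(\tilde S_n)^{-1}(y)\to\infty$ for $y>0$. You normalise by $\tilde K_n$ so the mass near the bottom is $1$. You then split into left, near-bottom, middle and positive-side pieces, which are the paper's $k_{n,1},\dots,k_{n,4}$. Your remark that $N^{-n}\log\tilde K_n\to0$ is sharper than the paper's bound $\tilde K_n\ge e^{-N^n\delta}(\cdots)$. It is what actually disposes of $k_{n,1},k_{n,3}$ on the stretches where $\W-\W(b_c)$ is positive but below $\delta$.

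\textbf{The gap is in the positive-side piece,} the step you yourself call the crux; as written it is computed incorrectly.

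First, the slope of $\tilde S_n$ is wrong. For $x>-b_c$ the increment $\W(x+b_c)-\W(b_c)$ equals $-V$, so $\tilde S_n$ there has slope $2\tilde K_n e^{-N^nV}/j_n$, not $2\tilde K_n/j_n$. Hence
\[
(\tilde S_n)^{-1}(y)+b_c=\frac{y\,j_n}{2\tilde K_n}\,e^{N^nV}.
\]

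Second, the mass count is wrong. By \eqref{scale}, $m_c([0,r^k])=N^k m_c([0,1])$, and $(rN)^{N^n}=r^{(1+d_f)N^n}$. So $m_c([0,(rN)^{N^n}])\approx N^{(1+d_f)N^n}$, not $N^{N^n}$. The discrepancy $N^{d_fN^n}$ is exponential in $N^n$, not subexponential.

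These two errors cancel to give your exponent $V+\log N$. If you correct only the mass count and keep your range, the bound becomes $\tilde K_n^{-1}e^{N^nV}N^{(1+d_f)N^n}$. This tends to $0$ only when $V<-(1+d_f)\log N$. That condition is not implied by $\mathbb{B}_1$: since $N<r$, there is a nonempty range $-(1+d_f)\log N<V<-\log N$ inside it. So the key estimate, as you wrote it, does not prove the claim.

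\textbf{The corrected count.} With the right range, the $m_c$-mass is
\[
\bigl((rN)^{N^n}e^{N^nV}\bigr)^{d_f}e^{o(N^n)}=e^{N^n(\log N+d_f(\log N+V))+o(N^n)},
\]
using $d_f\log r=\log N$. The positive-side piece is then $e^{N^n(1+d_f)(V+\log N)+o(N^n)}\to0$, because $V<-\log N$.

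The paper reaches this more directly. It shows $\tilde S_n(-b_c+\varepsilon r^{N^n-2n})\to\infty$, using $V+\log N+\delta<0$, so $\tilde y_{n,2}+b_c<\varepsilon r^{N^n-2n}$ eventually. That interval has mass exactly $m_c(\varepsilon)N^{N^n-2n}$. Combined with the factor $e^{N^nV}$ and $\tilde K_n^{-1}\le Ce^{N^n\delta}$, this gives $k_{n,4}\le CN^{-2n}e^{N^n(V+\log N+\delta)}\to0$. The relevant spatial scale on the environment-free side is $r^{N^n}$, which carries mass $N^{N^n}$, not $(rN)^{N^n}$. With that correction your argument goes through.
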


\begin{proof}
We follow the proof of Lemma 6.1 in \cite{STT}. 
For some $L>0$, we set $x=-b_c+L$. 
{Since $b_c < 0$, the interval $[-b_c, -b_c+L]$ lies 
in the positive region, where the environment is identically zero 
by \eqref{environment}. 
This and $\W(b_c) = V$ imply that 
the increment $(G_n \W)(y+b_c) - (G_n\W)(b_c)=-N^n V$ 
for any $y \in [-b_c, -b_c+L]$. Hence, we obtain} 
\begin{align*}
\tilde{S}_n(x) &= \frac{2 \tilde{K}_n}{j_n} \int_{-b_c}^{-b_c+L} 
\exp\left( (G_n \W)(y+b_c) -(G_n \W)(b_c) \right)\, dy \\
&= \frac{2L \tilde{K}_n}{(rN)^{N^n-2n}} \exp\left(-N^n V \right).
\end{align*}
For an arbitrary fixed $\varepsilon >0$, we set 
\[
x_n:=-b_c + \varepsilon 
j_n^{\log r/\log rN}=-b_c+\varepsilon r^{N^n-2n}, 
\]
which reflects the spatial scaling of the environment. 
Then, we obtain
\begin{align*}
\tilde{S}_n(x_n) &= 2 \varepsilon N^{2n} \tilde{K}_n
\exp\left( -N^n(V+\log N)\right). 
\end{align*} 
{Note that} 
\begin{align*}
\exp \left( 
-\left( (G_n\W)(y+{b_c})-(G_n\W)({b_c}) \right) \right)
 \geq \exp(-N^n \delta),\quad y \in [-\varepsilon_0, \varepsilon_0], 
\end{align*}
{which, together with \eqref{K_n}, implies that} 
\begin{align}\label{tK}
{\tilde{K}_n} &
{\geq \exp(-N^n \delta) 
(m_c^{b_c}(\varepsilon_0)-m_c^{b_c}(-\varepsilon_0))}\nonumber \\
&{=\exp(-N^n \delta) 
(m_c(b_c+\varepsilon_0)-m_c(b_c-\varepsilon_0)).}
\end{align}
{Hence, we have}
\begin{align*}
\tilde{S}_n(x_n) 
\geq 2 \varepsilon N^{2n} \exp \left( -N^n (V+\log N+\delta) \right) 
(m_c(b_c+\varepsilon_0)-m_c(b_c-\varepsilon_0)).
\end{align*}
Since $\delta \in (0, (-\log N -V))$, we obtain that for any $\varepsilon >0$ 
\begin{align}\label{infty}
\lim_{n \to \infty} \tilde{S}_n (x_n)=\infty.
\end{align}

For any arbitrary fixed $\tilde{\varepsilon}_1 <0<\tilde{\varepsilon}_2$, 
we set $\tilde{y}_{n,i}, i=1,2$ 
such that 
\[
\tilde{y}_{n,1}:=\left( \tilde{S}_n \right)^{-1}(\tilde{\varepsilon}_1) 
\quad \text{and}\quad 
\tilde{y}_{n,2}:=\left( \tilde{S}_n \right)^{-1}(\tilde{\varepsilon}_2).
\]
Then, we have 
\begin{align}\label{ell}
\lim_{n \to \infty} \tilde{y}_{n,1}=\ell_c {-b_c}
\quad \text{and}\quad 
\lim_{n \to \infty} \tilde{y}_{n,2}=\infty,
\end{align}
and there exists $n_0 \in {\mathbb N}$ such that for any $n \geq n_0$ 
\begin{align}\label{tilde2}
0=\tilde{S}_n(-b_c)< 
\tilde{\varepsilon}_2 =\tilde{S}_n(\tilde{y}_{n,2})
< \tilde{S}_n \left( -b_c+\varepsilon r^{N^n-2n} \right)
\quad \text{and}\quad 
\tilde{y}_{n,1} < -\varepsilon_0. 
\end{align}
For such a sufficiently large $n$, we divide 
$\tilde{M}_n([\tilde{\varepsilon}_1, \tilde{\varepsilon}_2])$ 
as follows: 
\begin{align*}
\tilde{M}_n([\tilde{\varepsilon}_1, \tilde{\varepsilon}_2]) &= 
\tilde{m}_n([\tilde{y}_{n,1}, \tilde{y}_{n,2}]) \\
&=:k_{n,1}+k_{n,2}+k_{n,3}+k_{n,4},
\end{align*}
where 
\begin{align*}
k_{n,1} &= \tilde{K}_n^{-1} \int_{\tilde{y}_{n,1}}^{-\varepsilon_0}
\exp \left( -N^n(\W(y+b_c)-\W(b_c)) \right) 
m_c^{b_c}(dy), \\
k_{n,2} &= \tilde{K}_n^{-1} \int_{-\varepsilon_0}^{\varepsilon_0} 
\exp \left( -N^n(\W(y+b_c)-\W(b_c)) \right) 
m_c^{b_c}(dy), \\
k_{n,3} &= \tilde{K}_n^{-1} \int_{\varepsilon_0}^{-b_c} 
\exp \left( -N^n(\W(y+b_c)-\W(b_c)) \right) 
m_c^{b_c}(dy), \\
k_{n,4} &= \tilde{K}_n^{-1} \int_{-b_c}^{\tilde{y}_{n,2}} 
\exp \left( -N^n(\W(y+b_c)-\W(b_c)) \right) 
m_c^{b_c}(dy).
\end{align*}
By the definition of $\tilde{K}_n$, we have $k_{n,2}=1$. 
{We next consider the term $k_{n,4}$. 
Since the interval $[-b_c, \tilde{y}_{n,2}]$ 
lies in the positive region, the increment of $\W$ is given by 
\begin{align}\label{CONST}
\W(y+b_c)-\W(b_c)=-V, \quad y \in [-b_c, \tilde{y}_{n,2}]. 
\end{align}
By the monotonicity of $\tilde{S}_n$, the relation \eqref{tilde2} 
implies $\tilde{y}_{n,2} + b_c < \varepsilon r^{N^n-2n}$ 
for a sufficiently large $n$. 
Using the self-similarity \eqref{scale}, we obtain 
\begin{align}\label{scale_m}
m_c(\tilde{y}_{n,2} + b_c) 
&\leq m_c(\varepsilon r^{N^n-2n}) \nonumber \\
&= m_c(\varepsilon) N^{-2n} \exp(N^n \log N).
\end{align}
Furthermore, by \eqref{tK}, 
we have 
\begin{align}\label{tk_1}
\tilde{K}_n^{-1} \leq \exp(N^n \delta) 
(m_c(b_c+\varepsilon_0)-m_c(b_c - \varepsilon_0))^{-1}. 
\end{align}
Combining \eqref{CONST}, \eqref{scale_m}, \eqref{tk_1} and the inequality 
$\log N + V + \delta <0$, 
we can evaluate $k_{n,4}$ as follows:}
\begin{align*}
k_{n,4} 
&= \tilde{K}_n^{-1} \exp \left( N^n V \right) 
\left( m_c^{b_c}(\tilde{y}_{n,2}) - m_c^{b_c}(-b_c) \right) \\
&= \tilde{K}_n^{-1} \exp \left( N^n V \right) m_c(\tilde{y}_{n,2}+b_c) \\
&\leq 
m_c(\varepsilon) N^{-2n} \tilde{K}_n^{-1} \exp( N^n(V+\log N))\\
&\leq \frac{m_c(\varepsilon)}{m_c(b_c+\varepsilon_0)-m_c(b_c-\varepsilon_0)} 
N^{-2n}\exp( N^n(V+\log N+\delta)) \\ 
&\to 0 \quad \text{as } n \to \infty.
\end{align*}
For the remaining terms, the exponential decay mechanism 
(the growth of $\W$ away from the bottom $b_c$) 
ensures their convergence to zero. 
This completes the proof of Lemma \ref{lem1}. 
\end{proof}

The following proposition implies the localization result: 

\begin{prop}\label{dirac}
For $Q$-almost all $\W \in {\mathbb B}_1$, 
the following holds: 
for any integer $d \geq 2, \varepsilon \in (0,1)$ and 
$f_k \in C_0(J_1), k=1, \ldots, d$,
\begin{align}
\lim_{n \to \infty}
E \left[ 
\prod_{k=1}^d 
f_k \left( Y_n(t_k, G_n \W) \right) 
\right] = 
\prod_{k=1}^d \int_{J_1} f_k(y) \delta_0(dy)
\end{align}
uniformly over $(t_1, \ldots, t_d) \in T_{d, \varepsilon}$. 
\end{prop}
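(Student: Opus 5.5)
We follow the proof of Theorem 3.2 (2) and Theorem 6.1 in \cite{STT}, in the same way that Theorem \ref{t1} (1) was derived from Proposition \ref{prop}. The process $Y_n(t, G_n\W)$ is the generalized diffusion process with scale function $\tilde{S}_n$ and speed measure $\tilde{m}_n$. Equivalently, in its natural scale $\tilde{S}_n(Y_n)$, it is the bi-generalized diffusion process corresponding to $(x, \tilde{M}_n)$. The plan is to combine Lemma \ref{lem1} (vague convergence $\tilde{M}_n(dx)\to\delta_0(dx)$), the convergence \eqref{ell0} of the scale functions, and the continuity theorem for transition densities, Proposition 5.1 in \cite{Og}. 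The limiting object $(x,\delta_0)$ is degenerate: a speed measure that is a single unit atom at $0$, with natural scale on the whole line (the left boundary $\ell_c-b_c$ is pushed to $-\infty$ in natural scale, and $\tilde{y}_{n,2}\to\infty$ by \eqref{ell}). For this limit the associated ``process'' sits at $0$ for all positive times. Indeed, by the Ogura theory, the transition probabilities $\tilde{p}_n(t,x,dy)=\tilde{q}_n(t,x,y)\tilde{M}_n(dy)$ converge to $\delta_0(dy)$ for every $t>0$ and every $x$, uniformly in $t\in[\varepsilon,1/\varepsilon]$ and $x$ in compacts. This is the key one-dimensional statement I would establish first.

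For this statement I would use the Ogura framework with the explicit limit: the generalized diffusion with speed measure $\delta_0$ on a natural-scale line is the time-changed Brownian motion $B(\tau^{-1}(t))$, where $\tau(t)=L(t,0)$. For $t>0$, its position is $B$ evaluated at the inverse local time at $0$, which is $0$. Hence the Green/resolvent kernel of the limit is $g_\lambda(x,y)\delta_0(dy)$ with all mass at $0$. Vague convergence of $\tilde{M}_n$ together with $\tilde{M}_n$ giving vanishing mass to $[\tilde{\varepsilon}_1,\tilde{\varepsilon}_2]\setminus\{0\}$-neighborhoods yields convergence of resolvents $G^n_\lambda f(x)\to g_\lambda(x,0)f(0)$. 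This holds for $f\in C_0$ in natural-scale coordinates, and then transfers back via $\tilde{S}_n^{-1}$. Since $\tilde{S}_n^{-1}$ maps any compact of $\mathbb{R}$ into neighbourhoods shrinking to $J_1$, test functions $f_k\in C_0(J_1)$ in the original coordinates correspond to $f_k\circ\tilde{S}_n^{-1}$, which converge to the constant $f_k(0)$ locally. Because the starting point of $Y_n$ is $-b_c$, and $\tilde{S}_n(-b_c)=0$, the starting point in natural scale is exactly $0$, which is the atom. This makes the initial condition harmless.

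Next I would pass from one-dimensional marginals to $d$-dimensional ones by the Markov property. Write $E[\prod_k f_k(Y_n(t_k))]$ as an iterated integral of transition kernels $\tilde{p}_n(t_k-t_{k-1},x,dy)$, with all gaps $t_k-t_{k-1}\ge\varepsilon$ and $t_k\le1/\varepsilon$ thanks to $T_{d,\varepsilon}$. Uniform convergence of $\tilde{p}_n(s,x,\cdot)f\to f(0)$ for $s\in[\varepsilon,1/\varepsilon]$ and $x$ in compacts gives the product $\prod_k f_k(0)$ by induction on $d$. This also requires a tightness estimate, analogous to \eqref{Proof_2}, so that the intermediate points stay in a compact set with high probability. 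I would obtain it from \eqref{infty}: the natural-scale image of $[-b_c,-b_c+\varepsilon r^{N^n-2n}]$ blows up, so escaping to large $y$ costs a diverging natural-scale distance. On the left, the barrier at $\ell_c-b_c$ (where $\tilde{S}_n\to-\infty$) plays the same role.

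The main obstacle is the uniformity and the degenerate limit. Proposition 5.1 of \cite{Og} is stated for transition densities with respect to the limiting speed measure. With a single atom, one must check that the density $\tilde{q}_n(t,x,0)\tilde{M}_n(\{\text{near }0\})\to1$ uniformly, rather than relying on pointwise density convergence. I would handle this via the resolvent/Laplace-transform version of Ogura's continuity theorem. I would then invert using the equicontinuity in $t$ furnished by the spectral representation on $t\in[\varepsilon,1/\varepsilon]$, as done in \cite{STT} for the analogous localization case.
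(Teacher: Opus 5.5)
There is a genuine gap where you transfer the natural-scale result back to the original coordinates. Your claim that $f_k\circ\tilde S_n^{-1}$ converges locally to the constant $f_k(0)$ is false. Since $\tilde S_n(-b_c)=0$, \eqref{ell} gives $\tilde S_n^{-1}(z)\to\ell_c-b_c$ (the boundary point of $J_1$) for $z<0$, $\tilde S_n^{-1}(z)\to\infty$ for $z>0$, and $\tilde S_n^{-1}(0)=-b_c$. Hence the pointwise limit is $f_k(-b_c)\mathbf{1}_{\{0\}}(z)$, not $f_k(0)$.

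The underlying problem is that, by \eqref{ell0}, every point of $J_1$ is mapped close to $0$ in natural scale. This includes the bottom $0$, the starting point $-b_c$ and the environment-free region to its right. So what your argument actually produces is $\tilde S_n(Y_n(t))\to 0$ in probability; this is what Ogura's continuity theorem for $(x,\tilde M_n)\to(x,\delta_0)$, resolvent convergence for fixed $f\in C_0(\mathbb{R})$, and your tightness bound give. It says nothing about \emph{where in $J_1$} the process is, and is equally consistent with $Y_n(t)$ staying at $-b_c$. Resolvent convergence for fixed test functions cannot be applied to the degenerating family $f_k\circ\tilde S_n^{-1}$, so your Markov-property induction over $T_{d,\varepsilon}$ has no correct one-dimensional statement to iterate.

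The missing ingredient is condition (A3) of \cite{STT}: the original-coordinate speed measure $\tilde m_n$, restricted to $J_1$, converges vaguely to $\delta_0$. This follows from Laplace's method around the unique minimum of $\widehat{W}_c$ at $b_c$. Your proposal never uses it, and Lemma \ref{lem1} does not imply it, because that lemma only concerns the push-forward $\tilde M_n=\tilde m_n\circ\tilde S_n^{-1}$.

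The localization must be argued at the level of densities. Write $E[f(Y_n(t))]=\int f(x)\,\tilde q_n(t,0,\tilde S_n(x))\,\tilde m_n(dx)$, and use that $\tilde q_n$ converges, uniformly on compacts, to the limiting density with respect to $\delta_0$, which is identically $1$. Then the law of $Y_n(t)$ on the window $[\tilde y_{n,1},\tilde y_{n,2}]$ is asymptotically $\tilde m_n$ restricted to that window. This restricted measure has mass $1+o(1)$ by the decomposition $k_{n,1}+\dots+k_{n,4}$ in the proof of Lemma \ref{lem1}, and it concentrates at $0$ by (A3). The complement of the window then has vanishing probability.

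This is exactly what Theorem 4.1 of \cite{STT} packages; the paper's proof consists only of verifying (A1)--(A4) and invoking that theorem. Once you insert this step, your induction over the time points goes through, since starting points near $0$ in original coordinates are sent by $\tilde S_n$ into a shrinking neighbourhood of $0$.
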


\begin{proof}
To show the assertion, we use Theorem 4.1 in \cite{STT}. 
Namely, we check the conditions (A1)--(A4) on page 689 in \cite{STT}. 
The convergence \eqref{ell0} corresponds to condition (A1). 
We remark that for any $n \in {\mathbb N}$ and 
$Q$-almost all $\W \in {\mathbb B}$ 
\[
P\left\{Y_n(0, G_n\W)=-b_c\right\}=1
\quad \text{and}\quad 
-b_c \in J_1,
\]
which satisfies condition (A2). 
As mentioned above, 
the vague convergence of the measure $\tilde{m}_n(dx)$ 
to $\delta_0(dx)$ on $J_1$ corresponds to condition (A3), 
and Lemma \ref{lem1} provides condition (A4). 
We therefore obtain the proposition by applying Theorem 4.1 in \cite{STT}.
\end{proof}

Proposition \ref{dirac} implies that for $Q$-almost all 
$\W \in {\mathbb B}_1$, any integer $d \geq 2$ and $\varepsilon \in (0,1)$, 
\[
\lim_{n \to \infty} P \left\{ \left| X(j_n t_k, G_n \W) -b_c\right|< 
\varepsilon,\ k=1, \ldots, d 
\right\} = 1 
\]
as $n \to \infty$ uniformly over $(t_1, \ldots, t_d) \in T_{d,\varepsilon}$. 
Since the integrand is bounded by $1$, 
the Lebesgue dominated convergence theorem allows us 
to exchange the limit and the integral over ${\mathbb B}_1$ 
with respect to the measure $Q$:
\begin{align}\label{B_1}
\lim_{n \to \infty} \frac{1}{Q({\mathbb B}_1)} 
\int_{{\mathbb B}_1} 
P \left\{ \left| X(j_n t_k, G_n \W) -b_c \right| < \varepsilon,\ 
k=1, \ldots, d 
\right\}\, Q(d \W) = 1
\end{align}
uniformly over $(t_1, \ldots, t_d) \in T_{d,\varepsilon}$.

\subsection{Case of ${\mathbb B}_2$}

We use the following notation: 
\begin{align}\label{a_0}
a_c & \in (b_c, 0) \text{ such that } \W(a_c)=M, \nonumber \\
c_c &:= \sup \left\{ x<a_c:\ \W(x)-\W(b_c)=\log rN \right\}, \nonumber \\
\alpha_c &:= \sup \left\{ x \in (b_c, 0):\ \W(x)=-\log N \right\}.
\end{align}

\subsubsection{The case where $b_c < \alpha_c < c_c < a_c <0$}

Let ${\mathbb B}_2'$ be the subset of $\W \in {\mathbb B}_2$ 
satisfying the condition $b_c < \alpha_c < c_c < a_c <0$. 
For $x \in C$, we set 
\[
\tau(x) := \inf\left\{ t > 0:\ X(t,G_n \W)=x \right\}. 
\]
Then, we have the following: 

\begin{prop}\label{prop1}
For $Q$-almost all $\W \in {\mathbb B}_2'$, 
there exists $\theta \in (0,1)$ 
such that 
\begin{align}
\lim_{n \to \infty} P \left\{ \tau(\alpha_c) < j_n^{\theta} \right\}=1. 
\end{align}
\end{prop}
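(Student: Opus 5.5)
The plan is to bound the hitting time of $\alpha_c$ by a first-moment (Green function) estimate for the process killed far to the right, and then apply Markov's inequality. Throughout, $\W\in{\mathbb B}_2'$ is fixed. The process $X(t,G_n\W)$ starts at $0$ and has scale function $S_n(x)=\int_0^x\exp(N^n\W(y))\,dy$ and speed measure $m_n(dx)=2\exp(-N^n\W(x))\,m_c(dx)$.

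First I will record the geometry of the environment on $[\alpha_c,0]$ that follows from $\W\in{\mathbb B}$.
\begin{itemize}
\item Since $\alpha_c$ is the last point before $0$ where $\W=-\log N$, we have $\alpha_c=\rho^-$.
\item Since $\rho^+<\rho^-=\alpha_c$, it follows that $\W\le\log r$ on $[\alpha_c,0]$.
\item By the definition of $\alpha_c$, also $\W\ge-\log N$ there.
\end{itemize}
For $Q$-almost all environments the upper bound is strict. The maximum of $\W$ on the compact interval $[\alpha_c,0]$ is attained, and a Brownian path a.s. does not return exactly to the level $\log r$ after $\rho^+$ (equivalently, it does not touch that level at a local maximum). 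Hence there is $\eta=\eta(\W)>0$ with $\max_{[\alpha_c,0]}\W\le(1-\eta)\log r$. Consequently
\[
S_n(0)-S_n(\alpha_c)\le|\alpha_c|\,r^{(1-\eta)N^n},\qquad
m_n([\alpha_c,0])\le 2\,m_c([\alpha_c,0])\,N^{N^n}.
\]
The extra information in ${\mathbb B}_2'$, namely the position of the hill $a_c$ and of $c_c$, is not needed for this step; it is used only afterwards.

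Next I will introduce the right barrier $R_n:=r^{(1-\eta/2)N^n}$ and set $\sigma_n:=\tau(\alpha_c)\wedge\tau(R_n)$. On $[0,\infty)$ the environment vanishes, so $S_n(x)=x$ there. The standard exit probability for a one-dimensional generalized diffusion gives
\[
P\{\tau(R_n)<\tau(\alpha_c)\}=\frac{S_n(0)-S_n(\alpha_c)}{R_n-S_n(\alpha_c)}\le\frac{|\alpha_c|\,r^{(1-\eta)N^n}}{R_n}\to0 .
\]
For the expected exit time I will use the Green function formula of \cite{IM}:
\[
E[\sigma_n]=\int_{\alpha_c}^{R_n}G_n(0,y)\,m_n(dy),
\]
where $G_n(0,y)\le S_n(0)-S_n(\alpha_c)$ for every $y$ in the interval. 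Splitting the integral at $0$, and using $m_n([0,R_n])=2m_c(R_n)=2N^{(1-\eta/2)N^n}$ by \eqref{scale}, I get
\[
E[\sigma_n]\le C\,r^{(1-\eta)N^n}\bigl(N^{N^n}+N^{(1-\eta/2)N^n}\bigr)\le C'(rN)^{\theta' N^n},
\qquad
\theta':=\frac{\log N+(1-\eta)\log r}{\log rN}<1 .
\]

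Finally I will choose $\theta\in(\theta',1)$. Since $j_n^\theta=(rN)^{\theta(N^n-2n)}$ and $2n=o(N^n)$, Markov's inequality gives
\[
P\{\tau(\alpha_c)\ge j_n^\theta\}\le P\{\tau(R_n)<\tau(\alpha_c)\}+E[\sigma_n]\,j_n^{-\theta}\to0 ,
\]
which is the claim.

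The step I expect to be the main obstacle is the strictness $\max_{[\alpha_c,0]}\W<\log r$. Without it the Green function bound only gives exponent $1$ rather than $\theta<1$, so this almost-sure property of Brownian paths must be justified carefully. A secondary point is checking that the Green function and exit-probability formulas apply verbatim to generalized diffusions whose speed measure lives on the fractal set $C$; I expect them to hold as in \cite{IM}.
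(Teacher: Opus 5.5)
Your argument is correct, and it takes a genuinely different route from the paper. The paper never estimates $\tau(\alpha_c)$ directly. It replaces $\W$ to the left of $\alpha_c$ by the auxiliary environment $\overline{W}_c$, which has an artificial deeper valley at $\beta_c<\alpha_c$ and agrees with $\W$ on $[\alpha_c,\infty)$. It then uses Theorem 4.1 of \cite{STT} (conditions (A1)--(A4), with Lemma~\ref{lem2} supplying (A4)) to show that the process in $G_n\overline{W}_c$ is localized near $\beta_c$ at time $j_n^{\eta/\log rN}$. From this it deduces that the path has already visited $\alpha_c$, up to which time the two processes coincide. You instead kill the process at $\alpha_c$ and at a far barrier $R_n$ on the environment-free side. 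You then combine the exit probability, the Green-function bound $E[\sigma_n]\le (S_n(0)-S_n(\alpha_c))\,m_n([\alpha_c,R_n])$, and Markov's inequality.

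The two exponents are essentially the same: your $\theta'\log rN=\log N+(1-\eta)\log r$ is at least $H=M+\log N$, which is the lower bound the paper imposes on its own $\eta$ in \eqref{lem_28_1}. Your route is shorter, self-contained and quantitative. It needs only $\W\in\mathbb{B}$; the ordering $\alpha_c<c_c<a_c$ defining $\mathbb{B}_2'$ is never used. It also makes explicit the control of excursions into the positive half-line, which the paper leaves inside the verification of (A1) and of $k_{n,8}\to0$. The paper's route buys uniformity of method: Propositions~\ref{prop1} and \ref{prop3} come from the same localization theorem as the main result.

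Two small points. First, take $R_n\in C$, e.g.\ $R_n=r^{k_n}$ with $k_n=\lceil(1-\eta/2)N^n\rceil$. The gap diffusion only hits points of $C$, and \eqref{scale} is stated for integer powers; this choice costs at most a factor $N$ in $m_c(R_n)$. Second, the strictness you flagged as the main obstacle is routine. We have $\max_{[\alpha_c,0]}\W=\max\{W(y): m_c(\alpha_c)\le y\le 0\}$. Since $\alpha_c>\rho^+$ and $\W(\alpha_c)=-\log N$, the time $-m_c(\alpha_c)$ lies strictly before the first passage of $\{W(-s)\}$ above $\log r$. That first passage a.s.\ equals the first hitting time of $\log r$, so the maximum is strictly below $\log r$.
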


The proof follows the strategy of Proposition 6.4 (2) 
in \cite{STT}, adapted to our fractal setting.
For $\W \in {\mathbb B}_2'$, we set 
\[
H:= \sup\left\{ \W(x):\ \alpha_c<x<0 \right\} - 
\inf \left\{ \W(x):\ \alpha_c<x<0 \right\}  = M+\log N.
\]
We then choose $\varepsilon' \in (0, (\log r -M)/2)$ and take 
$\eta=\eta(\varepsilon')>0$ such that 
\begin{align}\label{lem_28_1}
H + \varepsilon' < \eta < \log rN-\varepsilon'. 
\end{align}

Next, we define 
\[
\beta_c :=\sup \left\{ 
x <\alpha_c:\ m_c(\alpha_c)-m_c(x) = \varepsilon'
\right\}. 
\]
Since $C$ is unbounded and $m_c(x)$ is 
{the function defined in \eqref{m_c} associated 
with the infinite self-similar measure}, 
such $\beta_c$ is well-defined. 
To analyze the hitting time, we reconstruct 
a modified environment $\overline{W}_c$ from $\W \in {\mathbb B}_2'$ 
by using this $\beta_c$ as follows: 
\begin{align}\label{overline}
\overline{W}_c(x)=
\begin{cases}
\W(x), & x \geq \alpha_c, \\
m_c(x)-(m_c(\beta_c)+\log N + \varepsilon'), & \beta_c<x<\alpha_c, \\
-m_c(x)-(-m_c(\beta_c)+\log N +\varepsilon'), & x \leq \beta_c.
\end{cases}
\end{align}
This modified environment $\overline{W}_c$ is constructed 
to provide a tractable upper bound for the hitting time $\tau(\alpha_c)$ 
in Proposition \ref{prop1}. 
By effectively trapping the process in a deeper auxiliary valley at 
$\beta_c$, we can ensure that the process hits the point $\alpha_c$. 
Using $\overline{W}_c$, we set 
\begin{align*}
\gamma_c &:= \sup\left\{ x < \beta_c:\ 
\overline{W}_c(x)=\overline{W}_c(\alpha_c) \right\}, \\
\lambda_c &:= \sup\left\{ x < \gamma_c:\ 
\overline{W}_c(x)-\overline{W}_c(\beta_c)=\eta \right\}.
\end{align*}
We remark that (i) $\overline{W}_c(\beta_c)=
\min\left\{\overline{W}_c(x):\ x \in {\mathbb R}\right\}$ and 
(ii) $\overline{W}_c(\lambda_c) > M$. 

For a fixed $G_n\overline{W}_c$, we define the shifted process as follows: 
\begin{align}
\bar{Y}_n(t, G_n \overline{W}_c) &:= 
X(j_n^{\eta/\log rN}t, G_n \overline{W}_c) - \beta_c. 
\end{align}
Since $\eta/\log rN <1$ by \eqref{lem_28_1}, 
we can explicitly choose this exponent as the constant $\theta \in (0,1)$ 
required in Proposition \ref{prop1}. 
Note that $\overline{W}_c(x) = \W(x)$ for $x \geq \alpha_c$ 
by \eqref{overline}, which means the process in the modified environment 
$G_n \overline{W}_c$ behaves identically to the one 
in the original environment $G_n \W$ up to the hitting time $\tau(\alpha_c)$. 
Furthermore, since the process starts at $0$ and $\beta_c < \alpha_c < 0$, 
reaching an $\varepsilon$-neighborhood of $\beta_c$ at time $j_n^\theta$ 
guarantees that the continuous path of the process has 
already hit $\alpha_c$. Therefore, the following proposition, 
which asserts the localization of $\bar{Y}_n$ at $\beta_c$, implies 
Proposition \ref{prop1}:

\begin{prop}\label{prop2}
For $Q$-almost all $\overline{W}_c$ and any $\varepsilon>0$, it holds that 
\begin{align}\label{prop27}
\lim_{n \to \infty} 
P \left\{ |\bar{Y}_n(1, G_n \overline{W}_c)| <\varepsilon \right\} =1.
\end{align}
\end{prop}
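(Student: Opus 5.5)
We will prove Proposition \ref{prop2} with the same scheme used for Proposition \ref{dirac}: for $\bar{Y}_n$ we check conditions (A1)--(A4) of Theorem 4.1 in \cite{STT}, taking $\beta_c$ as the bottom of the valley and $\eta$ in place of $\log rN$. With the time scale $j_n^{\eta/\log rN}=(rN)^{(N^n-2n)\eta/\log rN}$, the normalized scale function of $\bar{Y}_n$ is
\[
\bar{S}_n(x)=\frac{2\bar K_n}{j_n^{\eta/\log rN}}\int_{-\beta_c}^{x}\exp\bigl(N^n(\overline{W}_c(y+\beta_c)-\overline{W}_c(\beta_c))\bigr)\,dy ,
\]
up to polynomial factors $N^{2n}$ and $r^{2n}$, which are negligible against $\exp(\pm N^n\cdot\mathrm{const})$. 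The speed measure is
\[
\bar m_n(dx)=\bar K_n^{-1}\exp\bigl(-N^n(\overline{W}_c(x+\beta_c)-\overline{W}_c(\beta_c))\bigr)\,m_c^{\beta_c}(dx),
\]
where $\bar K_n$ is the local mass of this measure near $0$.

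\textbf{Condition (A1).} We compute the limit of $\bar S_n$. Since $\overline{W}_c(\beta_c)$ is the global minimum (remark (i)), the Laplace method gives
\[
\tfrac{1}{N^n}\log\bar S_n(x)\to \sup_{\text{between }0\text{ and }x}\bigl(\overline{W}_c(\cdot+\beta_c)-\overline{W}_c(\beta_c)\bigr)-\eta+O(\varepsilon')\text{-terms from }\bar K_n .
\]
On the left this supremum reaches $\eta$ at $\lambda_c$, which gives $\bar S_n\to-\infty$ for $x<\lambda_c-\beta_c$ and $\bar S_n\to 0$ for $x>\lambda_c-\beta_c$ up to the right barrier. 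On the right, the relevant heights are at most $\overline{W}_c(\lambda_c)$ (remark (ii)) and $H+\varepsilon'<\eta$ by \eqref{lem_28_1}. Hence on the right the limit should be $0$ up to the region $x>-\beta_c$. There the environment is $0$, the increment equals $-\overline{W}_c(\beta_c)=\log N+\varepsilon'-m_c(\beta_c)+\dots$, and the Lebesgue length grows like $r^{N^n}$. We will imitate the computation of $\tilde S_n(x_n)$ in Lemma \ref{lem1} to show that $\bar S_n$ diverges to $+\infty$ on the scale $j_n^{\eta/\log rN\cdot \log r/\log rN}$. The state interval is then $\bar J=(\lambda_c-\beta_c,\infty)$, and the starting point $-\beta_c$ lies in $\bar J$, which is (A2).

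\textbf{Conditions (A3) and (A4).} For (A3), we show that $\bar m_n$ restricted to $\bar J$ converges vaguely to $\delta_0$. Away from $\beta_c$ the exponent is $\le -N^n\kappa$ for some $\kappa>0$, while $\bar K_n\ge e^{-N^n\delta}m_c$-mass as in \eqref{tK}. The linear pieces of $\overline{W}_c$ in $m_c$, with slope $\pm1$ on $(\lambda_c,\alpha_c)$, ensure the strict increase away from $\beta_c$ that this argument requires. For (A4), we will split $\bar M_n([\tilde\varepsilon_1,\tilde\varepsilon_2])$ into four pieces $k_{n,1},\dots,k_{n,4}$ exactly as in Lemma \ref{lem1}. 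The piece on the positive half-line is bounded, via \eqref{scale}, by
\[
N^{-2n}\exp\bigl(N^n(\overline{W}_c(\beta_c)\cdot(-1)\cdots)\bigr),
\]
and it vanishes because $\overline{W}_c(\beta_c)<-\log N-\varepsilon'$ plays the role of $V+\log N+\delta<0$, with $\delta<\varepsilon'$ chosen. The remaining pieces vanish by exponential decay. Theorem 4.1 of \cite{STT} then gives convergence of the one-dimensional marginal at $t=1$ to $\delta_0$, which is \eqref{prop27}.

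\textbf{Main obstacle.} The delicate step is the positive side in (A1) and (A4). The time scale is now $j_n^{\theta}$ rather than $j_n$, so the spatial extent explored on $[0,\infty)$ is only $r^{\theta(N^n-2n)}$, with $m_c$-mass $\approx N^{\theta N^n}$. We must check the inequality $\theta\log N+\overline{W}_c(\beta_c)+\delta<0$, that is, $\eta\log N/\log rN<\log N+\varepsilon'+m_c(\beta_c)$-type terms. This needs the choice of $\eta<\log rN-\varepsilon'$ together with $\varepsilon'<(\log r-M)/2$. If it fails as stated, we would first try shrinking $\varepsilon'$ and moving $\eta$ toward $H+\varepsilon'$.
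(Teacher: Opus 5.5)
Your route is the paper's: verify (A1)--(A4) of \cite[Theorem 4.1]{STT} for $\bar Y_n$. You get (A1) from the Laplace method, (A3) from the normalization $\bar K_n$, and (A4) from the four-piece split of Lemma~\ref{lem2}. The problem is the positive half-line, the one step that carries content, where you misevaluate the depth of the auxiliary valley. You write $-\overline{W}_c(\beta_c)=\log N+\varepsilon'-m_c(\beta_c)+\cdots$ and later $\overline{W}_c(\beta_c)<-\log N-\varepsilon'$. In fact \eqref{overline} gives $\overline{W}_c(\beta_c)=-(\log N+\varepsilon')$ exactly. The additive constants are chosen to cancel $m_c(\beta_c)$, which is also what makes $\overline{W}_c$ continuous at $\alpha_c$ with $\overline{W}_c(\alpha_c)=-\log N=\overline{W}_c(\beta_c)+\varepsilon'$. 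Because of this slip, your two paragraphs contradict each other. The (A4) paragraph declares the positive piece negligible. The ``main obstacle'' paragraph leaves the same estimate open, with a fallback of retuning $\varepsilon'$ and $\eta$. The displayed bound for that piece is also garbled. As written, the proposal does not establish the estimate that both the location of $\bar y_{n,2}$ and the vanishing of the positive piece in (A4) rely on.

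With the correct value the step closes at once and no retuning is needed. Restrict the integral defining $\bar K_n$ to a small window on which the increment is at most $\delta\le\varepsilon'$; this gives $\bar K_n\ge c_\delta e^{-N^n\delta}$. Together with $\bar K_n\le\text{const}$, this also shows $N^{-n}\log\bar K_n\to0$, so there are no $O(\varepsilon')$ corrections in (A1). Take $\theta=\eta/\log rN<1$ and $x_n'=-\beta_c+\varepsilon r^{\theta(N^n-2n)}$. Then
\[
\bar S_n(x_n')=2\varepsilon\bar K_n\,e^{N^n(\log N+\varepsilon')-\theta\log N\,(N^n-2n)}\ge 2\varepsilon c_\delta\,e^{N^n((1-\theta)\log N+\varepsilon'-\delta)}\to\infty .
\]
Using monotonicity of $m_c$ and \eqref{scale} at the integer $\lceil\theta(N^n-2n)\rceil$, the positive piece is at most
\[
c_\delta^{-1}m_c(\varepsilon)\,N\,e^{N^n\delta}\,e^{-N^n(\log N+\varepsilon')}\,N^{\theta(N^n-2n)}\le C_\delta\,e^{-N^n((1-\theta)\log N+\varepsilon'-\delta)}\to0 .
\]
So your condition $\theta\log N+\overline{W}_c(\beta_c)+\delta<0$ reduces to $\delta<\varepsilon'+(1-\theta)\log N$, which is automatic. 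For (A1) you should also state that $\bar S_n(x)\to0$ for fixed $x>-\beta_c$. This holds because $\eta>H+\varepsilon'\ge\log N+\varepsilon'$, since $M\ge\W(0)=0$.

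The paper instead works at the larger scale $-\beta_c+\varepsilon r^{N^n-2n}$ with $\bar K_n\ge e^{-N^n\varepsilon'}(\cdots)$. There the factors $e^{\pm N^n\varepsilon'}$ cancel exactly and $k_{n,8}=O(N^{-2n})$, so the polynomial factor you dismissed as negligible is the entire decay. Your $\theta$-scale, with its exponential margin $(1-\theta)\log N$, is if anything the more comfortable choice.
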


The scale function $\bar{S}_n(x)$ and the speed measure $\bar{m}_n(dx)$ 
associated with $\bar{Y}_n(t, G_n \overline{W}_c)$ are given by 
\begin{align}
\bar{S}_n(x) &= \frac{2\bar{K}_n}{j_n^{\eta/\log rN}} 
\int_{-\beta_c}^x 
\exp\left( (G_n \overline{W}_c)(y+\beta_c)-(G_n \overline{W}_c)(\beta_c) 
\right)\, dy, \\
\bar{m}_n(dx) &= \bar{K}_n^{-1} \exp\left(- 
\left( (G_n \overline{W}_c)(x+\beta_c)-(G_n \overline{W}_c)(\beta_c)\right) 
\right)\, m_c^{\beta_c}(dx),
\end{align}
where 
\begin{align}
m_c^{\beta_c}(x)&:=m_c(x+\beta_c)-m_c(\beta_c),\\
\bar{K}_n&:= \int_{-\bar{\varepsilon}_0}^{\bar{\varepsilon}_0} 
\exp \left( -N^n 
\left( \overline{W}_c(y+\beta_c)-\overline{W}_c(\beta_c)\right)
\right)\, m_c^{\beta_c}(dy), \\
\bar{\varepsilon}_0 &:= \min\{\beta_c-\gamma_c, \alpha_c-\beta_c\}. 
\nonumber 
\end{align}
Similarly to \eqref{4_M}, we set 
\begin{align}\label{5_M}
\bar{M}_n(x):= \bar{m}_n \circ \left( \bar{S}_n \right)^{-1}(x)
=\sup\left\{ \bar{m}_n(y):\ \bar{S}_n(y) \leq x \right\}.
\end{align}

Since $j_n^{-\eta/\log rN}=\exp(-\eta(N^n-2n))$, 
for $Q$-almost all $\overline{W}_c$, we observe 
\begin{align}\label{A1_1}
\lim_{n \to \infty} \bar{S}_n(x)
= 
\begin{cases}
-\infty, & x < \lambda_c-\beta_c,\\
0, & x > \lambda_c-\beta_c. 
\end{cases}
\end{align}
We set $J_2:=(\lambda_c-\beta_c, \infty)$. 
We remark that the convergence \eqref{A1_1} corresponds to condition (A1) 
in \cite{STT}. 
For $Q$-almost all $\overline{W}_c$, we have that 
$P \left\{
\bar{Y}_n(0, G_n \overline{W}_c)=-\beta_c \right\}=1$ for any 
$n \in {\mathbb N}$ 
and that $-\beta_c \in J_2$. These facts satisfy condition (A2). 
Furthermore, the definition of $\bar{K}_n$ implies that 
on $J_2$, $\bar{m}_n(dx)$ converges vaguely to $\delta_0(dx)$ 
as $n \to \infty$ for $Q$-almost all $\overline{W}_c$, 
which satisfies condition (A3). 
The following lemma corresponds to Lemma \ref{lem1} and 
provides condition (A4). 

\begin{lem}\label{lem2}
On ${\mathbb R}$, the measure 
$\bar{M}_n(dx)$ 
converges vaguely to $\delta_0(dx)$ as $n \to \infty$ 
for $Q$-almost all $\overline{W}_c$. 
\end{lem}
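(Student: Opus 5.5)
The plan is to mirror the proof of Lemma \ref{lem1}: fix arbitrary $\tilde{\varepsilon}_1<0<\tilde{\varepsilon}_2$, set $\bar{y}_{n,i}:=(\bar{S}_n)^{-1}(\tilde{\varepsilon}_i)$, and show that $\bar{M}_n([\tilde{\varepsilon}_1,\tilde{\varepsilon}_2])=\bar{m}_n([\bar{y}_{n,1},\bar{y}_{n,2}])\to 1$. Since $\tilde{\varepsilon}_1,\tilde{\varepsilon}_2$ are arbitrary, this gives vague convergence to $\delta_0$. By \eqref{A1_1}, $\bar{y}_{n,1}\to\lambda_c-\beta_c$, so $\bar{y}_{n,1}<-\bar{\varepsilon}_0$ for large $n$, because $\lambda_c<\gamma_c\le\beta_c-\bar{\varepsilon}_0$. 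The main task is to locate $\bar{y}_{n,2}$. For this I will show that $\bar{S}_n(x)\to\infty$ already for $x=x_n$ at the right edge of the region where the speed measure could carry mass. Then $\bar{y}_{n,2}<x_n$, and the mass of $\bar{m}_n$ on $[\bar{\varepsilon}_0,x_n]$ can be estimated directly.

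The decomposition is $\bar{m}_n([\bar{y}_{n,1},\bar{y}_{n,2}])=k_{n,1}+k_{n,2}+k_{n,3}+k_{n,4}$, split at $-\bar{\varepsilon}_0$, $\bar{\varepsilon}_0$, $\alpha_c-\beta_c$ and $\bar{y}_{n,2}$ (further subdivided at $-\beta_c$ if needed). By the normalization, $k_{n,2}=1$. For $k_{n,1}$, on $[\bar{y}_{n,1},-\bar{\varepsilon}_0]$ the shifted environment satisfies $\overline{W}_c(y+\beta_c)-\overline{W}_c(\beta_c)\ge\kappa>0$, by construction \eqref{overline} ($\overline{W}_c$ is strictly increasing in $|m_c(x)-m_c(\beta_c)|$ away from $\beta_c$). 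I will bound $\bar{K}_n$ from below by $\exp(-N^n\delta')\,(m_c(\beta_c+\bar{\varepsilon}_0)-m_c(\beta_c-\bar{\varepsilon}_0))$ with $\delta'<\kappa$, as in \eqref{tK}. This lower bound uses that $\beta_c$ lies in the support $C$, so that $m_c$ charges every neighbourhood of $\beta_c$. It then gives $k_{n,1}\le C\exp(-N^n(\kappa-\delta'))\to0$. The same argument handles $k_{n,3}$ on $[\bar{\varepsilon}_0,\alpha_c-\beta_c]$, where $\overline{W}_c$ stays at least $\overline{W}_c(\beta_c)+\kappa$.

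The delicate term, and the main obstacle, is $k_{n,4}$, which covers $[\alpha_c,\bar{y}_{n,2}+\beta_c]$ in the original coordinate. There $\overline{W}_c=\W$, and the relevant minimum is $-\log N$ (attained at $\alpha_c$; to the right, $\W\ge -\log N$ by definition of $\alpha_c$ and since $\W=0$ on the positive side). Relative to the bottom, $\overline{W}_c(\beta_c)=-\log N-\varepsilon'$, so the exponent is at most $N^n\varepsilon'$ and the density is at most $\exp(-N^n\varepsilon')$ times the normalizing factor. The danger is the unbounded $m_c$-mass of the positive half-line, which grows like $N^{N^n-2n}$ at spatial scale $r^{N^n-2n}$, as in \eqref{scale_m}. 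I will take $x_n:=-\beta_c+\varepsilon\, j_n^{\eta/\log rN\cdot \log r/\log rN}$ or a similar spatial scale adapted to the time scale $j_n^{\eta/\log rN}$. On the positive part the integrand of $\bar{S}_n$ equals $\exp(-N^n\overline{W}_c(\beta_c))=\exp(N^n(\log N+\varepsilon'))$, and the prefactor is $\bar{K}_n j_n^{-\eta/\log rN}\approx e^{-N^n(\eta+\delta')}$. Hence $\bar{S}_n(x_n)\gtrsim \varepsilon\, r^{N^n\eta/\log rN}e^{N^n(\log N+\varepsilon'-\eta-\delta')}$, and I must check that this diverges, using $\eta<\log rN-\varepsilon'$ from \eqref{lem_28_1}. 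Correspondingly, $k_{n,4}\lesssim e^{N^n\delta'}e^{-N^n(\log N+\varepsilon')}m_c(x_n+\beta_c)$, with $m_c(x_n+\beta_c)\approx N^{N^n\eta/\log rN}$. This tends to zero provided $\eta\log N/\log rN<\log N+\varepsilon'-\delta'$, which holds since $\eta<\log rN$. Balancing these two requirements by the choice of the spatial scale of $x_n$ and of $\delta'$ small is what I expect to require the most care; the middle negative stretch $[\alpha_c,0]$ contributes at most $e^{-N^n(\varepsilon'-\delta')}$ times a bounded mass and vanishes. Summing the four terms yields the vague convergence to $\delta_0$ for $Q$-almost all $\overline{W}_c$.
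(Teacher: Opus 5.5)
Your proposal is correct and has the same architecture as the paper's proof. You use the same points $\bar y_{n,i}$, a four-term split whose normalized middle term equals $1$, a cutoff $x_n$ with $\bar S_n(x_n)\to\infty$ that confines $\bar y_{n,2}$, and self-similarity of $m_c$ to control the mass on the environment-free half-line. You differ in the cutoff scale and in how $\bar K_n$ is bounded.

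The paper takes $\bar x_n=-\beta_c+\varepsilon r^{N^n-2n}$, the spatial scale of the full time $j_n$. Then \eqref{scale} applies with the integer exponent $N^n-2n$. The growth $N^{N^n-2n}$ of $m_c$, together with the crude bound \eqref{tK2}, cancels the height $\log N+\varepsilon'$ in \eqref{c21} exactly and leaves $k_{n,8}=O(N^{-2n})$, while $\bar S_n(\bar x_n)$ diverges with exponential room.

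You take the scale $r^{(N^n-2n)\eta/\log rN}$, matched to the time $j_n^{\eta/\log rN}$. At that scale, $\bar S_n(x_n)$ times your bound on the positive-side mass is of order one. So the two requirements you expected to have to balance are in fact the same inequality $\eta\log N/\log rN<\log N+\varepsilon'-\delta'$, which holds as soon as $\eta<\log rN$ and $\delta'\le\varepsilon'$. The price is a non-integer exponent, which you handle with $m_c(\varepsilon r^{s})\le N^{\lceil s\rceil}m_c(\varepsilon)$.

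Your Laplace-type bound $\bar K_n\ge e^{-N^n\delta'}(\cdots)$ with small $\delta'$ is sharper than \eqref{tK2}, and it is what actually makes $k_{n,1}$ and $k_{n,3}$ vanish. Just outside $[-\bar\varepsilon_0,\bar\varepsilon_0]$, on the side where the minimum defining $\bar\varepsilon_0$ is not attained, the increment can be below $\varepsilon'$, and there \eqref{tK2} alone does not suffice. The paper passes over this with ``for the same reason''.

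As you wrote it, though, your bound is not valid with the full mass $m_c(\beta_c+\bar\varepsilon_0)-m_c(\beta_c-\bar\varepsilon_0)$, because the increment equals $\varepsilon'$ at one endpoint of $[-\bar\varepsilon_0,\bar\varepsilon_0]$. Fix it by restricting to a window $[-\rho,\rho]$ on which the increment is at most $\delta'$. Then use $m_c(\beta_c+\rho)>m_c(\beta_c)$, which holds because $\beta_c$ is defined as a supremum.
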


\begin{proof}
We prove this lemma in the same manner as Lemma \ref{lem1}. 
For an arbitrary fixed $\varepsilon >0$, we set 
\[
\bar{x}_n:=-\beta_c+\varepsilon j_n^{\log r/\log rN}
=-\beta_c +\varepsilon r^{N^n-2n}. 
\]
On the interval $[-\beta_c, \bar{x}_n]$, the modified environment 
is identically zero, and 
$\overline{W}_c(\beta_c) = -(\log N + \varepsilon')$ by \eqref{overline}. 
Thus, we have 
\[
(G_n \overline{W}_c)(y+\beta_c) - (G_n \overline{W}_c)(\beta_c)
=N^n(\log N + \varepsilon'),
\quad 
y \in [-\beta_c, \bar{x}_n]. 
\]
Since the length of the integration interval for $\bar{S}_n$ is 
$\varepsilon j_n^{\log r/\log rN}$, we have
\begin{align*}
\bar{S}_n (\bar{x}_n)
&= 2 \varepsilon \bar{K}_n j_n^{(\log r - \eta)/ \log rN} 
\exp( N^n(\log N+\varepsilon') ) \\
&= 2 \varepsilon \bar{K}_n 
\exp(N^n(\log rN - \eta+ \varepsilon'))
\exp(2n(\eta-\log r)).
\end{align*}
In the case of $\overline{W}_c$, instead of \eqref{tK}, 
we have the following inequality: 
\begin{align}\label{tK2}
\bar{K}_n \geq \exp (-N^n \varepsilon') 
\left( 
m_c(\beta_c+\bar{\varepsilon}_0)-m_c(\beta_c-\bar{\varepsilon}_0)
\right).
\end{align}
This implies that for any $\varepsilon >0$
\begin{align*}
\bar{S}_n(\bar{x}_n) 
\geq 2 \varepsilon 
\exp( N^n(\log rN - \eta)) 
\exp( 2n(\eta-\log r)) 
\left( 
m_c(\beta_c+\bar{\varepsilon}_0)-m_c(\beta_c-\bar{\varepsilon}_0)
\right). 
\end{align*}
Since $\log rN - \eta >  \varepsilon'>0$ 
by \eqref{lem_28_1}, we obtain that 
$\lim_{n \to \infty} \bar{S}_n(\bar{x}_n) = \infty$.

For any arbitrary fixed $\bar{\varepsilon}_1<0<\bar{\varepsilon}_2$, 
we set $\bar{y}_{n,i}, i=1,2$ such that 
\[
\bar{y}_{n,1}= \left( \bar{S}_n \right)^{-1}(\bar{\varepsilon}_1)
\quad \text{and} \quad 
\bar{y}_{n,2}= \left( \bar{S}_n \right)^{-1}(\bar{\varepsilon}_2). 
\]
Then, we have
\begin{align}
\lim_{n \to \infty} \bar{y}_{n,1}=\lambda_c -\beta_c 
\quad \text{and} \quad 
\lim_{n \to \infty} \bar{y}_{n,2}=\infty, 
\end{align}
which imply that for all sufficiently large $n$
\begin{align}\label{bar_S}
0=\bar{S}_n(-\beta_c) < \bar{\varepsilon}_2 = \bar{S}_n(\bar{y}_{n,2}) 
< \bar{S}_n(-\beta_c+ \varepsilon r^{N^n-2n})\quad \text{and}\quad 
\bar{y}_{n,1} < -\bar{\varepsilon}_0. 
\end{align}
We set 
\begin{align*}
\bar{M}_n([\bar{\varepsilon}_1, \bar{\varepsilon}_2]) 
&=\bar{m}_n([\bar{y}_{n,1}, \bar{y}_{n,2}]) \\
&=: k_{n,5}+k_{n,6}+k_{n,7}+k_{n,8},
\end{align*}
where 
\begin{align*}
k_{n,5} &= \bar{K}_n^{-1} 
\int_{\bar{y}_{n,1}}^{-\bar{\varepsilon}_0}
\exp \left(
-N^n\{\overline{W}_c(y+\beta_c)-\overline{W}_c(\beta_c)\} 
\right)
m_c^{\beta_c}(dy), \\
k_{n,6} &= \bar{K}_n^{-1} 
\int_{-\bar{\varepsilon}_0}^{\bar{\varepsilon}_0} 
\exp \left(
-N^n\{\overline{W}_c(y+\beta_c)-\overline{W}_c(\beta_c)\}
\right) 
m_c^{\beta_c}(dy), \\
k_{n,7} &= \bar{K}_n^{-1} 
\int_{\bar{\varepsilon}_0}^{-\beta_c} 
\exp \left(
-N^n\{\overline{W}_c(y+\beta_c)-\overline{W}_c(\beta_c)\}
\right) 
m_c^{\beta_c}(dy), \\
k_{n,8} &= \bar{K}_n^{-1} 
\int_{-\beta_c}^{\bar{y}_{n,2}} 
\exp
\left(
-N^n\{\overline{W}_c(y+\beta_c)-\overline{W}_c(\beta_c)\}
\right) 
m_c^{\beta_c}(dy).
\end{align*}
By the definition of $\bar{K}_n$, we have $k_{n,6}=1$. 
The terms $k_{n,5}, k_{n,7}$ and $k_{n,8}$ 
converge to zero as $n \to \infty$ 
for the same reason as for the terms $k_{n,1}, k_{n,3}$ and $k_{n,4}$, 
respectively. 
Specifically, for $k_{n,8}$, 
the increment of $\overline{W}_c$ is given by 
\begin{align}\label{c21}
\overline{W}_c(y+\beta_c)-\overline{W}_c(\beta_c)
= \log N + \varepsilon',\quad 
y \in [-\beta_c, \bar{y}_{n,2}].
\end{align}
The relation \eqref{bar_S} and self-similarity of $m_c$ imply that 
\begin{align}\label{c22}
m_c(\bar{y}_{n,2}+\beta_c) 
&\leq m_c(\varepsilon r^{N^n-2n}) \nonumber \\
&=N^{N^n-2n}m_c(\varepsilon).
\end{align}
Furthermore, by \eqref{tK2}, 
we have 
\begin{align}\label{c23}
\bar{K}_n^{-1} \leq \exp(N^n \varepsilon') 
(m_c(\beta_c+\bar{\varepsilon}_0)-m_c(\beta_c - \bar{\varepsilon}_0))^{-1}. 
\end{align}
Combining \eqref{c21}, \eqref{c22} and \eqref{c23}, 
we can evaluate $k_{n,8}$ as follows:
\begin{align*}
k_{n,8} &= \bar{K}_n^{-1} 
\exp \left(-N^n (\log N + \varepsilon') \right) 
m_c(\bar{y}_{n,2}+\beta_c) \\
&\leq \bar{K}_n^{-1} 
\exp (-N^n \varepsilon') N^{-2n} m_c(\varepsilon) \\
&\leq \frac{m_c(\varepsilon)}
{m_c(\beta_c+\bar{\varepsilon}_0)-m_c(\beta_c - \bar{\varepsilon}_0)} 
N^{-2n} \\ 
&\to 0\ \mbox{as}\ n \to \infty.
\end{align*}
These estimates imply the vague convergence of 
$\bar{M}_n(dx)$ to $\delta_0(dx)$ as $n \to \infty$. 
\end{proof}

\begin{proof}[Proof of Proposition \ref{prop2}] 

The assertion can be shown in the same manner as Proposition \ref{dirac}. 
Specifically, the discussion leading to Lemma \ref{lem2} 
and the lemma itself ensure that the process satisfies 
the four conditions (A1)--(A4) of Theorem 4.1 in \cite{STT}. 
We therefore obtain the localization of 
$\bar{Y}_n(1, G_n \overline{W}_c)$ at the auxiliary valley bottom $\beta_c$, 
which implies Proposition \ref{prop2}. 
\end{proof}

Proposition \ref{prop1} implies that 
for $Q$-almost all $\W \in {\mathbb B}_2'$, 
the process starting at the origin reaches $\alpha_c$ 
within the time scale $j_n^{\theta}$. 

Once the process reaches $\alpha_c$, the problem is reduced 
to the case of $\mathbb{B}_1$. 
Specifically, the process is now driven by the potential gradient 
toward the even deeper valley bottom at $b_c(\W)$ 
within the time scale $j_n$. 
Since the remaining time $j_n - j_n^{\theta}$ is still of order $j_n$, 
we can directly apply the localization result of the case of $\mathbb{B}_1$. 
We thus obtain the following: for any integer $d \geq 2$ and 
$\varepsilon \in (0,1)$, 
\begin{align}\label{remain}
\lim_{n \to \infty} \frac{1}{Q({\mathbb B}_2')} 
\int_{{\mathbb B}_2'} 
P \left\{ \left| X(j_n t_k, G_n \W) -b_c(\W) \right| < \varepsilon, 
k=1,\ldots, d \right\}\, 
Q(d \W) = 1
\end{align}
uniformly over $(t_1, \ldots, t_d) \in T_{d, \varepsilon}$.

\subsubsection{Case of multiple transitions}

Next, we consider the case where $b_c < c_c < \alpha_c < a_c <0$ and 
let ${\mathbb B}_2''$ be the set of environments $\W$ satisfying 
this condition. 
For this case, we set up multiple steps to reach the final localized state. 
For $k \in {\mathbb N}$, we set 
\begin{align}
\alpha_c^{(k)} := \sup \left\{ x<0:\ \W(x)=-k \log N \right\}.
\end{align}
Since $\W(b_c)=V(\log rN) > -\infty$ for $Q$-almost all $\W$, 
the sequence of levels $\{-k \log N\}_{k \in \mathbb{N}}$ eventually 
falls below $V(\log rN)$. 
This ensures the existence of a finite integer $k_0$ such that 
$\alpha_c^{(k_0)}$ is the deepest level satisfying $b_c < \alpha_c^{(k_0)}$. 
Since $c_c < \alpha_c < 0$ for $\W \in {\mathbb B}_2''$, this integer must 
satisfy $k_0 \geq 2$. Consequently, we obtain
\begin{align}\label{k_0}
b_c < \alpha_c^{(k_0)} < \alpha_c^{(k_0-1)} < \dots < \alpha_c^{(1)} 
= \alpha_c < 0. 
\end{align}
Let $X^x(t, G_n\W)$ denote a diffusion process starting at $x$. 
The strong Markov property of the process implies that 
the total travel time can be decomposed into successive hitting times.
For $x$ and $y \in C$, we set 
\[
\tau(x; y) := \inf \left\{t > 0:\ X^x(t,G_n \W)=y \right\}. 
\]
To complete the proof for $\W \in \mathbb{B}_2''$, 
it suffices to show the following proposition:

\begin{prop}\label{prop3}
For $Q$-almost all $\W \in {\mathbb B}_2''$ 
and any integer $k \leq k_0$, 
there exists $\theta \in (0,1)$ such that 
\begin{align}\label{hit_prob}
\lim_{n \to \infty}
P \left\{ \tau(\alpha_c^{(k-1)}; \alpha_c^{(k)}) < j_n^{\theta} \right\}=1. 
\end{align}
\end{prop}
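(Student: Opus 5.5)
We will prove Proposition \ref{prop3} by induction on $k$, each step copying the proof of Proposition \ref{prop1}. Set $\alpha_c^{(0)}:=0$, so the case $k=1$ is exactly Proposition \ref{prop1}, except that the relevant geometric condition is now checked on the stretch $(\alpha_c^{(1)},0)$ instead of relying on $\alpha_c<c_c$.

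For general $k\le k_0$, the process starts at $\alpha_c^{(k-1)}$, where $\W(\alpha_c^{(k-1)})=-(k-1)\log N$. By the definition of $\alpha_c^{(k)}$ as a last-passage point,
\[
\W(x) > -k\log N \qquad \text{on } (\alpha_c^{(k)},\alpha_c^{(k-1)}).
\]
We introduce the local barrier height
\[
H_k:=\sup\{\W(x):\ \alpha_c^{(k)}<x<\alpha_c^{(k-1)}\}-\inf\{\W(x):\ \alpha_c^{(k)}<x<\alpha_c^{(k-1)}\},
\]
and the \emph{upward} barrier to the right of the starting point that the process might climb. For $\W\in{\mathbb B}_2''$, the valley of depth $\log rN$ has not yet been completed at level $\alpha_c^{(k-1)}$, because $c_c<\alpha_c^{(k)}$ for $k\le k_0$. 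Hence every hill the process must cross on its way from $\alpha_c^{(k-1)}$ to $\alpha_c^{(k)}$ has relative height strictly less than $\log rN$. Relative to the start, the valley floor before $\alpha_c^{(k)}$ lies at most $\log N$ deeper, so the effective barrier satisfies $H_k<\log rN$ almost surely, with strict inequality. We then choose $\varepsilon'>0$ and $\eta$ with
\[
H_k+\varepsilon'<\eta<\log rN-\varepsilon',
\]
and take $\theta=\eta/\log rN$.

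Next we build the modified environment $\overline{W}^{(k)}_c$ as in \eqref{overline}: keep $\W$ on $[\alpha_c^{(k)},\infty)$, and replace it to the left by the piecewise $m_c$-linear valley with bottom at $\beta_c^{(k)}$, where $m_c(\alpha_c^{(k)})-m_c(\beta_c^{(k)})=\varepsilon'$. The bottom value is $-(k\log N+\varepsilon')$. By the strong Markov property, the process $X^{\alpha_c^{(k-1)}}$ in $G_n\W$ and in $G_n\overline{W}^{(k)}_c$ coincide up to $\tau(\alpha_c^{(k-1)};\alpha_c^{(k)})$. By continuity of paths, it then suffices to prove localization of $X^{\alpha_c^{(k-1)}}(j_n^{\theta},G_n\overline{W}^{(k)}_c)$ near $\beta_c^{(k)}$, i.e.\ the analogue of Proposition \ref{prop2}. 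This is done by verifying (A1)--(A4) of Theorem 4.1 in \cite{STT} for the shifted process with the normalized scale and speed $\bar S_n,\bar m_n$, just as in the proof of Lemma \ref{lem2}:
\begin{itemize}
\item (A1) follows from the Laplace asymptotics of $\bar S_n$, which give a left boundary $\lambda_c^{(k)}$ with $\overline{W}^{(k)}_c(\lambda_c^{(k)})-\overline{W}^{(k)}_c(\beta_c^{(k)})=\eta$.
\item (A2) is immediate from the starting point.
\item (A3) follows from the choice of normalization $\bar K_n$.
\item (A4) is the analogue of Lemma \ref{lem2}. We split $\bar m_n([\bar y_{n,1},\bar y_{n,2}])$ into four pieces. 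The piece on the positive half-line is controlled by \eqref{scale} exactly as for $k_{n,8}$; the exponent there becomes $-N^n(k\log N+\varepsilon')$ against the growth $N^{N^n}$, which is even more favourable for $k\ge1$. The remaining pieces decay because the modified environment exceeds its minimum away from $\beta_c^{(k)}$.
\end{itemize}

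The main obstacle is the first step: proving rigorously that $H_k<\log rN$, with the correct meaning of ``barrier'' when the start is not $0$. Two issues arise. First, to the right of $\alpha_c^{(k-1)}$ the process sees the previously visited stretch together with the flat positive side. We must check that $\bar S_n$ still blows up to $+\infty$ at the spatial scale $r^{N^n-2n}$. This requires $\log rN-\eta>0$, which holds, together with the start value now being $-(k-1)\log N$, which only helps. Second, we must confirm that $\alpha_c^{(k)}>c_c$ for all $k\le k_0$, so that no hill of height $\ge\log rN$ above a local minimum lies between consecutive levels. This should follow from the definitions of $c_c$, $a_c$, $b_c$ and $k_0$ in \eqref{a_0} and \eqref{k_0}, but we will need a careful case check. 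We would first try to derive it directly from $M>V+\log rN$ and the last-passage characterization of $\alpha_c^{(k)}$. Finally, taking $\theta$ to be the maximum over the finitely many $k\le k_0$ makes $\theta$ uniform in $k$.
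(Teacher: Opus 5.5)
Your plan follows the paper's proof (auxiliary valley $\overline{W}_c^{(k)}$ with bottom $\beta_c^{(k)}$, conditions (A1)--(A4) of Theorem 4.1 in \cite{STT}, path continuity, $\theta=\max_k\eta^{(k)}/\log rN$). However, the step you flag as the main obstacle is a genuine gap, and your sketch for it cannot work.

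\textbf{The inequality $H_k<\log rN$ is not established, and is false in general.} Three points:
\begin{itemize}
\item Your claim $c_c<\alpha_c^{(k)}$ for all $k\le k_0$ is false at $k=k_0$. Maximality of $k_0$ gives $V\ge-(k_0+1)\log N$, hence $\W(c_c)=V+\log rN>-k_0\log N$. Going left from $a_c$, the path first reaches $-k_0\log N$ at $\alpha_c^{(k_0)}$, so it crosses the higher level $V+\log rN$ strictly before. Thus $\alpha_c^{(k_0)}<c_c$.
\item The inference ``hills are less than $\log rN$ high, and the floor is at most $\log N$ deeper'' only yields $H_k<\log rN+\log N$.
\item $H_k<\log rN$ actually fails with positive probability. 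Read $\W$ leftwards from $0$. Let it rise to $M\in(\log r-\log N+\epsilon,\log r)$ at $a_c$, fall to $-\log N$ at $\alpha_c$, dip slightly, and rise to $\log r-\log N+\epsilon$ (with $0<\epsilon<\log N$ small). Then let it fall through $-2\log N$ at $\alpha_c^{(2)}$ down to a very deep minimum $V$, and finally rise by $\log rN$.
\end{itemize}
In this example, $\rho^+<\rho^-$, $M>V+\log rN$, $b_c<c_c<\alpha_c<a_c$ and $k_0\ge2$. All leftward rises on $(\ell^-,0)$ stay below $\log rN$; the largest is about $\log r+\epsilon$. These are open conditions on the path, and even $c_c<\alpha_c^{(2)}$ holds. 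Yet
\[
H_2=(\log r-\log N+\epsilon)+2\log N=\log rN+\epsilon .
\]
So no admissible $\eta$ exists. With the auxiliary bottom at $-(2\log N+\varepsilon')$, condition (A1) fails for every $\eta<\log rN$. (For $k=1$ the bound is fine, since $\W\le\log r$ on $(\rho^-,0)$ when $\W\in\mathbb{B}$.)

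\textbf{The paper has the same hole.} Its choice $\varepsilon^{(k)}\in(0,\{\log r-(M^{(k)}+(k-1)\log N)\}/2)$ tacitly assumes $H^{(k)}<\log rN$, so matching the paper does not close the gap. The statement itself remains plausible. The barrier the process really has to climb is measured from the minimum to the \emph{right} of the hill, and the valley definition controls exactly that: $h^{\#}:=\max_{[b_c,0]}W^{\#}<\log rN$ almost surely. (In the example, $h^{\#}$ is about $\log r+\epsilon$.)

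\textbf{A repair within the same method.} Keep the first rung $0\to\alpha_c$. Below $-\log N$, replace the ladder $-k\log N$ by the finer levels $-\log N-j\delta$ with $0<\delta<\log rN-h^{\#}$. Then:
\begin{itemize}
\item the hill between consecutive last-passage points lies at most $h^{\#}+\delta<\log rN$ above the lower rung, so an admissible $\eta$ exists;
\item every auxiliary bottom stays at least $\log N+\varepsilon'$ below $0$, which is exactly what the $k_{n,8}$-type estimate needs to absorb the factor $N^{N^n}$ coming from \eqref{scale}.
\end{itemize}

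\textbf{Your (A1) bullet only checks the left side.} With the start at $\alpha_c^{(k-1)}$ instead of $0$, convergence $\bar S_n\to0$ on the whole half-line also requires
\[
M-\overline{W}_c^{(k)}(\beta_c^{(k)})<\eta
\]
for the hill at $a_c$. This is not automatic for deeper rungs (for the paper's $k=2$ it forces $M<\log r-\log N$). You therefore need either a version of Theorem 4.1 of \cite{STT} on a bounded interval $J$, or a separate argument that the process does not pass $a_c$ (or that the time it spends beyond $a_c$ is negligible).
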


\begin{proof}

For each $k \leq k_0$, we define the local hilltop $M^{(k)}$ 
and its relative height $H^{(k)}$ as follows:
\begin{align*}
M^{(k)} &= \sup\left \{\W(x): \alpha_c^{(k)} < x < \alpha_c^{(k-1)} \right\},\\
H^{(k)} &= \sup\left \{\W(x): \alpha_c^{(k)} < x < \alpha_c^{(k-1)} \right\} 
- \inf\left\{ \W(x): \alpha_c^{(k)} < x < \alpha_c^{(k-1)} \right\} 
=M^{(k)} + k \log N.
\end{align*}
Following the proof of Proposition \ref{prop1}, we choose 
$\varepsilon^{(k)} \in (0, \{\log r-(M^{(k)}+(k-1) \log N)\}/2)$ 
and take a constant $\eta^{(k)}=\eta^{(k)}(\varepsilon^{(k)})$ such that
\begin{align}\label{eta_range}
H^{(k)}+ \varepsilon^{(k)}< \eta^{(k)} < \log rN - \varepsilon^{(k)}. 
\end{align}
Next, we define the auxiliary valley bottom 
\begin{align*}
\beta_c^{(k)} := \sup\left\{x < \alpha_c^{(k)}:\ 
m_c(\alpha_c^{(k)}) - m_c(x) = \varepsilon^{(k)}\right\}, 
\end{align*}
and reconstruct the auxiliary environment $\overline{W}_c^{(k)}$ 
as follows: 
\begin{align}
\overline{W}_c^{(k)}=
\begin{cases}
\W(x),& x \geq \alpha_c^{(k)}, \\
m_c(x)- \left( m_c(\beta_c^{(k)})+k\log N + \varepsilon^{(k)} \right),
& \beta_c^{(k)} < x < \alpha_c^{(k)}, \\ 
-m_c(x)- \left( -m_c(\beta_c^{(k)})+k\log N + \varepsilon^{(k)} \right),
& x \leq \beta_c^{(k)}.
\end{cases}
\end{align}
Then, by the same argument as that for showing 
Proposition \ref{prop2}, we obtain that for $Q$-almost all 
$\overline{W}_c^{(k)}$ and any $\varepsilon >0$ 
\begin{align}\label{step_loc_final}
\lim_{n \to \infty} 
P \left\{ 
\left|
X^{\alpha_c^{(k-1)}}(j_n^{\eta^{(k)}/\log rN} , G_n \overline{W}_c^{(k)})
-\beta_c^{(k)}
\right| < \varepsilon 
\right\} = 1. 
\end{align}
Note that $G_n \overline{W}_c^{(k)} = G_n \W$ on $[\alpha_c^{(k)}, \infty)$. 
Since the point $\alpha_c^{(k)}$ is located between 
the starting point $\alpha_c^{(k-1)}$ and the localization center 
$\beta_c^{(k)}$, the continuity of paths and \eqref{step_loc_final} 
ensure that the process hits $\alpha_c^{(k)}$ 
within the time scale $j_n^{\eta^{(k)}/\log rN}$. 
By setting $\theta = \max_{k \leq k_0} \{ \eta^{(k)} / \log rN \}$, 
the condition (\ref{eta_range}) guarantees that $\theta < 1$. 
Consequently, for any integer $k \leq k_0$, we obtain (\ref{hit_prob}). 
\end{proof}

For $\W \in {\mathbb B}_2''$, we evaluate hitting times for multiple 
steps via Proposition \ref{prop3}. 
Since each time step is extremely small compared to $j_n$, the case 
${\mathbb B}_2''$ reduces to the case ${\mathbb B}_2'$. Namely, 
we have the following: for 
any integer $d \geq 2$ and $\varepsilon \in (0,1)$, 
\begin{align}\label{remain2}
\lim_{n \to \infty} \frac{1}{Q({\mathbb B}_2'')} 
\int_{{\mathbb B}_2''} 
P \left\{ \left| X(j_n t_k, G_n \W) -b_c(\W) \right| < \varepsilon, 
k=1, \ldots, d \right\}\, 
Q(d \W) = 1
\end{align}
uniformly over $(t_1, \ldots, t_d) \in T_{d,\varepsilon}$. 

\begin{proof}[Completion of the Proof of Theorem \ref{t1} (2)] 

We now combine the results for ${\mathbb B}_1$ and 
${\mathbb B}_2$. 

For $Q$-almost all $\W \in {\mathbb B}_1$, the particle is trapped 
in the deep valley $b_c$ throughout the time scale $j_n$ 
as shown in \eqref{B_1}. 

For $Q$-almost all $\W \in {\mathbb B}_2$ (including both cases 
${\mathbb B}_2'$ and ${\mathbb B}_2''$), the particle reaches 
the final localization center $b_c$ after a finite number of transitions 
as shown above. 
Since each of these transitions occurs within a time scale $j_n^{\theta}$ 
that is negligible compared to $j_n$, the remaining time available 
for the process to stay at $b_c$ is still of order $j_n$. 
These arguments ensure that the problem for ${\mathbb B}_2$ 
is effectively reduced 
to the case ${\mathbb B}_1$ once the process hits the point 
$\alpha_c$ in \eqref{a_0} or $\alpha_c^{(k_0)}$ in \eqref{k_0}, 
and we obtain \eqref{remain} and \eqref{remain2}. 

Since the probability is uniformly bounded by 1, 
the Lebesgue dominated convergence theorem 
justifies the exchange of the limit and the integral over ${\mathbb B}$. 
Then, we have that for any integer $d \geq 2$ and $\varepsilon \in (0,1)$, 
\[
\lim_{n \to \infty} \frac{1}{Q({\mathbb B})} \int_{\mathbb B} 
P \left\{ 
\left| X(j_n t_k, G_n \W) -b_c(\W) \right| < \varepsilon, k=1, \ldots, d 
\right\}\, 
Q(d \W) = 1
\]
uniformly over $(t_1, \ldots, t_d) \in T_{d,\varepsilon}$. 
This completes the proof of Theorem \ref{t1} (2). 
\end{proof}

\section*{Acknowledgements}
This work was supported by JSPS KAKENHI Grant Number 24K06786.

\bibliographystyle{elsarticle-num}

\begin{thebibliography}{99}

\bibitem{br} T. Brox, A one-dimensional diffusion process in a Wiener medium, 
Ann. Probab. 14 (1986) 1206--1218.
\url{https://doi.org/10.1214/aop/1176992815}

\bibitem{F1} T. Fujita, A fractional dimension, selfsimilarity and 
a generalized diffusion operator, in: K. Ito, N. Ikeda (Eds.), 
Probabilistic Methods in Mathematical Physics, Proceedings of 
Taniguchi International Symposium, Katata and Kyoto, 1985, 
Kinokuniya, Tokyo, 1987, pp. 83--90.

\bibitem{F2} T. Fujita, Some asymptotic estimates of transition 
probability densities for generalized diffusion processes 
with self-similar speed measures, Publ. Res. Inst. Math. Sci. 26 
(1990) 819--840. 
\url{https://doi.org/10.2977/prims/1195170736}

\bibitem{beyond} A.K. Golmankhaneh, A.S. Balankin, Sub- and super-diffusion 
on Cantor sets: Beyond the paradox, Phys. Lett. A 382 (2018) 960--967. 
\url{https://doi.org/10.1016/j.physleta.2017.12.042}

\bibitem{Golmankhaneh2023} A.K. Golmankhaneh, L.A.O. Ontiveros, 
Fractal calculus approach to diffusion on fractal combs, 
Chaos, Solitons \& Fractals 175 (2023) 113941. 
\url{https://doi.org/10.1016/j.chaos.2023.113941}

\bibitem{IO} M. Iizuka, Y. Ogura, Convergence of one-dimensional diffusion 
processes to a jump process related to population genetics, 
J. Math. Biol. 29 (1991) 671--687. 
\url{https://doi.org/10.1007/BF00164019}

\bibitem{IM} K. Ito, H.P. McKean, Diffusion processes 
and their sample paths, 
Springer-Verlag, Berlin, 1965. 

\bibitem{KS} K. Kawazu, Y. Suzuki, Limit theorems for a diffusion process 
with a one-sided Brownian potential, J. Appl. Probab. 43 (2006) 997--1012.
\url{https://doi.org/10.1239/jap/1167076483}

\bibitem{KST} K. Kawazu, Y. Suzuki, H. Tanaka, A diffusion process 
with a one-sided Brownian potential, Tokyo J. Math. 24 (2001) 211--229.
\url{https://doi.org/10.3836/tjm/1244208535}

\bibitem{Ke} H. Kesten, The limit distribution of Sinai's random walk 
in random environment, Physica A 138 (1986) 299--309. 
\url{https://doi.org/10.1016/0378-4371(86)90186-0}

\bibitem{Og} Y. Ogura, One-dimensional bi-generalized diffusion processes, 
J. Math. Soc. Japan 41 (1989) 213--242.
\url{https://doi.org/10.2969/asjm/04120213}

\bibitem{Si} Ya.G. Sinai, The limit behavior of a one-dimensional random walk 
in a random medium, Theory Probab. Appl. 27 (1982) 256--268. 
\url{https://doi.org/10.1137/1127028}

\bibitem{S} Y. Suzuki, Asymptotic behavior of stochastic processes 
in random environments, Sugaku Expositions 38 (2025) 273--299.
\url{https://doi.org/10.1090/suga/504}

\bibitem{STT} Y. Suzuki, H. Takahashi, Y. Tamura, Diffusion processes 
with one-sided selfsimilar random potentials, Potential Anal. 62 (2025) 
683--701.
\url{https://doi.org/10.1007/s11118-024-10141-8}


\bibitem{TT1} H. Takahashi, Y. Tamura, Homogenization on disconnected 
selfsimilar fractal sets in $\mathbb{R}$, Tokyo J. Math. 28 (2005) 127--138.
\url{https://doi.org/10.3836/tjm/1125603411}

\bibitem{TT2} H. Takahashi, Y. Tamura, Diffusion processes 
in Brownian environments on disconnected selfsimilar fractal sets in 
$\mathbb{R}$, Stat. Probab. Lett. 193 (2023) 109694. 
\url{https://doi.org/10.1016/j.spl.2022.109694}

\bibitem{TTI} T. Takemura, M. Tomisaki, M. Iizuka, 
The weak mutation and strong selection limit of the Moran 
model satisfies the strong Markov property, 
Ann. Reports of Graduate School of Humanities and Sciences. 
Nara Women's University 30 (2015) 105--112. 
\url{https://doi.org/10935/3970}

\bibitem{T} H. Tanaka, Localization of a diffusion process 
in a one-dimensional Brownian environment, Comm. Pure Appl. Math. 47 (1994) 
755--766. 
\url{https://doi.org/10.1002/cpa.3160470602}

\bibitem{layered} V.R. Voller, F.D.A. Aar\~ao Reis, Universal superdiffusive 
infiltration in layered media with fractal distributions of 
low conductivity inclusions, Adv. Water Resour. 172 (2023) 104365. 
\url{https://doi.org/10.1016/j.advwatres.2022.104365}

\end{thebibliography}

\renewcommand{\baselinestretch}{1}

\end{document}